\documentclass[12pt,a4paper,pagewise]{article}
\usepackage[margin=2cm]{geometry}
\usepackage[utf8]{inputenc}
\usepackage[T1]{fontenc}
\usepackage{amsfonts}
\usepackage{amsfonts,amsmath,amssymb,mathabx}
\usepackage{xcolor}
\usepackage[mathscr]{eucal}
\usepackage{amscd}

\usepackage[scaled=0.90]{helvet} 
\usepackage[english]{babel}
\usepackage[T1]{fontenc}
\usepackage[utf8]{inputenc}
\usepackage{amsmath}
\usepackage{graphicx}
\usepackage{geometry}
\usepackage{amsfonts}
\usepackage{amssymb}
\usepackage{amsthm, thmtools}
\usepackage{xcolor}
\usepackage{mathrsfs}
\usepackage{titling}
\usepackage{abstract}
\usepackage{mathrsfs}
\usepackage{mathtools}
\usepackage{pythontex} 
\usepackage{csquotes}
\usepackage{titlesec}

\newcommand*{\N}{\mathbb{N}}
\newcommand*{\Z}{\mathbb{Z}}
\newcommand*{\Q}{\mathbb{Q}}
\newcommand*{\R}{\mathbb{R}}
\newcommand*{\C}{\mathbb{C}}

\newcommand{\su}{\mathrm{SU}}

\newcommand{\tr}{\mathrm{Tr}}
\newcommand{\vol}{\mathrm{vol}}
\newcommand{\ceq}{\coloneqq}

\numberwithin{equation}{section}

\newtheorem*{theorem*}{Theorem}
\newtheorem*{definition*}{Definition}
\newtheorem*{corollary*}{Corollary}
\newtheorem{thm}{Theorem}[section]

\newtheorem{cor}[thm]{Corollary}
\theoremstyle{definition}
\newtheorem{de}[thm]{Definition}

\newtheorem{rmk}[thm]{Remark}


\usepackage{xpatch}
\usepackage{blindtext}
\makeatletter
\xpatchcmd{\titlepage}{\@restonecolfalse\newpage}{\@restonecolfalse}{}{}
\xpatchcmd{\endtitlepage}{\if@restonecol\twocolumn \else \newpage \fi}{\if@restonecol\twocolumn \else  \fi}{\typeout{success}}{\typeout{fail}}
\makeatother

\def\Z{\mathbb{Z}}

\title{\large Explicit Multiplicities in the Cuspidal Spectrum of $\mathrm{SU}(n,1)$}
\date{}

\usepackage{hyperref}
\hypersetup{
    colorlinks,
    citecolor=black,
    filecolor=black,
    linkcolor=black,
    urlcolor=black
}
\hypersetup{linktocpage}

\begin{document}

\numberwithin{thm}{section}
\numberwithin{de}{section}
\numberwithin{lemma}{section}
\numberwithin{cor}{section}
\numberwithin{ex}{section}
\numberwithin{rmk}{section}

\author{Alexander Stadler}

\maketitle

\begin{abstract}
This paper investigates the cuspidal spectrum of the quotient of the real Lie group $G= SU(n,1)$ and a principal congruence subgroup $\Gamma(m)$ for $m\geq 3$, focusing on the multiplicities of integrable discrete series representations. Using the Selberg trace formula, we derive an explicit formula for the multiplicity $m(\Gamma(m), \pi_\tau)$ of a representation $\pi_\tau$ of integrable discrete series of $G$ within $L^2(\Gamma(m) \backslash G)$. The formula involves the Harish-Chandra parameter $\tau$, the discriminant $D_\ell$ of the imaginary quadratic field $\ell$ over which $G$ is defined and special values of the Dirichlet $L$-function $L_\ell$ associated to $\ell$. We apply these results on the one hand to compute the cuspidal cohomology of locally symmetric spaces $\Gamma(m) \backslash G / K$, where $K$ is a maximal compact subgroup of $G$. On the other hand we use them to reprove a known rationality result involving the values of $L_\ell$ at odd positive integers and make them more explicit. This work extends previous studies on real and quaternionic hyperbolic spaces to the complex hyperbolic case, contributing to the understanding of the spectrum of $\R$-rank one algebraic groups.
\end{abstract}


\pagenumbering{roman}

\pagestyle{headings}
\pagenumbering{arabic}

\section{Introduction}

Let $G$ be a connected semisimple Lie group and let $\Gamma$ be an arithmetic subgroup of $G$. As in \cite{warner}, we denote by $L_{\Gamma\backslash G}$ the left regular representation on the space $L^2(\Gamma\backslash G)$. It decomposes as 
$$
L^2(\Gamma\backslash G)=L_d^2(\Gamma\backslash G) \oplus L_c^2(\Gamma\backslash G)
$$
into the discrete and continuous spectrum. We study the first summand and denote by $\hat{G}$ the set of unitary equivalence classes of irreducible unitary representations of $G$. We get the direct Hilbert sum (actually the discrete spectrum is defined like this)
$$
L_{\Gamma\backslash G} |_{L_d^2(\Gamma\backslash G)} = \bigoplus_{\pi\in\hat{G}} \pi^{m_d(\Gamma,\pi)},
$$
where $m_d(\Gamma,\pi)$ is the finite multiplicity of $\pi$ in $L_{\Gamma\backslash G} \big|_{L_d^2(\Gamma\backslash G)} $ (strictly speaking it is the multiplicity of the unitary equivalence class $[\pi]$ of irreducible unitary representations, but we simply write $\pi$ for the equivalence class). 
In a certain setting, we will compute these multiplicities explicitly for representations $\pi$ of integrable discrete series.
The Selberg trace formula (or more accurately the special case in \cite{warner}) gives a tool to do this for $\R$-rank one algebraic groups (\cite{warner1}). There are three cases of particular geometric interest:
\begin{enumerate}
    \item $G=SO(n,1)^{0}$, corresponding to the real hyperbolic space
    \item $G=SU(n,1)$, corresponding to the complex hyperbolic space
    \item $G=Sp(n,1)$, corresponding to the quaternionic hyperbolic space
\end{enumerate}
Aspects of the cases 1. and 3. have been studied in \cite{rohlfs} and \cite{Harald} respectively. Case 2. is the topic of this article:
Let $\ell\ceq\mathbb{Q}(\sqrt{-k})$ be an imaginary quadratic extension of $\Q$, and $\mathrm{G}=\mathrm{SU}(n,1)$ the group scheme corresponding to a hermitian form of signature $(n,1)$ over $\ell^{n+1}$.
We will define the arithmetic subgroups $\Gamma$ of $G$ in Section \ref{arithmetic-subgroup} and its congruence subgroups $\Gamma(m)$ in Definition \ref{congruence-subgroup}.
Let $L_\ell(s)$ be the $L$-function attached to the quadratic field $\ell$, i.e. if $\zeta_\ell(s)$ is the Dedekind zeta-function of $\ell$, then
\begin{align*}
    \zeta_\ell(s)=L_\ell(s) \zeta(s).
\end{align*}
To shorten some notation, we will set for an integer $k\geq 2$:
\begin{align*}
    Z_{\ell}(k)\ceq \left \{ \begin{matrix}
        \zeta(k) & \text{if $k$ is even} \\
         L_{\ell}(k) & \text{if $k$ is odd}.
    \end{matrix} \right.
\end{align*}
Let $D_\ell$ be the discriminant of $\ell$ over $\Q$. That is 
    $$
        D_\ell=\left\{ \begin{matrix}
        -k \quad &k\equiv 3 \mod{4} \\
        -4k \quad &else.
        \end{matrix} \right.
    $$
In Section \ref{harish}, we define a system of positive roots for $G$ and attach to a Harish-Chandra parameter $\tau=(\tau_1,...,\tau_n)$ a discrete series representation $\pi_\tau$.
The following, Theorem \ref{mult}, is our main result:

\begin{theorem*}
Let $\Gamma$ be a principal arithmetic lattice of $G=SU(n,1)$ and let $\Gamma(m)$ be a net principal congruence subgroup (as in Definition \ref{congruence-subgroup}) of index $h_m$ in $\Gamma$, with $n\geq 3$ and $m\geq 3$. Let $\pi_\tau \in \hat{G}$ be of integrable discrete series with Harish-Chandra parameter $\tau=(\tau_1,.\ldots,\tau_n)_{\{\varepsilon_i\}}$ with $\tau_{n+1}\ceq - \sum_{i=1}^n \tau_i$, where
\begin{align*}
        \tau_1 &> \tau_2 > \ldots >\tau_n > \tau_{n+1}+n.
        \end{align*}
Then we have
\begin{align*}
    & m(\Gamma(m),\pi_\tau) =d_\tau \mathrm{vol}(\Gamma  \backslash G)h_m \\
    &= \frac{h_m}{4^n} \prod_{i=1}^n  \frac{\left|\prod_{j=i+1}^{n+1} (\tau_i-\tau_j)\right|}{(2\pi)^{i+1}} 
    \cdot |D_\ell|^{\frac{(n-1)(n+2)}{4}}  \cdot \prod_{k=2}^{n+1} Z_{\ell}(k) 
    \cdot \prod_{p \in T_\ell}\lambda_p 
\end{align*}
if $n$ is odd and 
\begin{align*}
& m(\Gamma(m),\pi_\tau) =d_\tau \mathrm{vol}(\Gamma  \backslash G)h_m + O\left( \frac{h_m}{m^n}\right) \\
&= h_m \left( \frac{1}{4^n} \prod_{i=1}^n  \frac{\left|\prod_{j=i+1}^{n+1} (\tau_i-\tau_j)\right|}{(2\pi)^{i+1}} \cdot |D_\ell|^{\frac{n(n+3)}{4}}  \cdot \prod_{k=2}^{n+1} Z_{\ell}(k)  + O\left( \frac{1}{m^n}\right)\right)
\end{align*}
if $n$ is even.
The finite set $T_\ell$ consists of those primes $p$ where $\mathrm{G}(\Q_p)$ is not quasi-split. The rational coefficients $\lambda_p$ are defined in \eqref{lambda}.
\end{theorem*}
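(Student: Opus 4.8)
The starting point is the Selberg trace formula in the form of Warner (\cite{warner}, \cite{warner1}) for $\R$-rank one groups. For a matrix coefficient $f$ of an integrable discrete series representation $\pi_\tau$, the spectral side collapses essentially to $m(\Gamma(m),\pi_\tau)$ times the formal degree, and the geometric side reduces — because $\Gamma(m)$ is net, hence torsion-free, so only the identity contributes among the elliptic classes, and because $\pi_\tau$ is integrable so the hyperbolic and parabolic orbital integrals of $f$ vanish or contribute only lower-order terms — to the identity contribution $\vol(\Gamma(m)\backslash G)\cdot f(e)$. Since $f(e) = d_\tau$ is the formal degree of $\pi_\tau$ and $\vol(\Gamma(m)\backslash G) = h_m\,\vol(\Gamma\backslash G)$, this yields the identity $m(\Gamma(m),\pi_\tau) = d_\tau\,\vol(\Gamma\backslash G)\,h_m$ (with the stated $O(h_m/m^n)$ error when $n$ is even, coming from the unipotent/parabolic terms that survive in even dimension). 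So the first displayed equality in each case is essentially a citation plus a verification that the integrability hypothesis on $\tau$ kills the relevant orbital integrals.

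The second equality — the explicit formula — is then the product of two independent computations. First, the formal degree $d_\tau$ of the discrete series with Harish-Chandra parameter $\tau$ is given by the Harish-Chandra formula $d_\tau = c_G\,\bigl|\prod_{\alpha>0}\langle\tau,\alpha\rangle\bigr|$; with the explicit positive root system for $\su(n,1)$ set up in Section \ref{harish} one computes $\prod_{\alpha>0}\langle\tau,\alpha\rangle$ as the product over $i$ of $\prod_{j>i}(\tau_i-\tau_j)$, and collects the powers of $2\pi$ and the factor $4^{-n}$ into the constant $c_G$ (this is bookkeeping with the chosen normalization of Haar measure). Second — and this is the arithmetic heart — one must evaluate $\vol(\Gamma\backslash G)$ for a principal arithmetic lattice. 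The plan is to use Prasad's volume formula: $\vol(\Gamma\backslash G)$ is expressed as a product of a rational number, a power of $|D_\ell|$ determined by the dimension of $G$ and the $\ell/\Q$-functional equation factors, and the value at negative integers (equivalently, via the functional equation, at $k=2,\dots,n+1$) of the relevant Artin $L$-functions attached to the quasi-split inner form of $\su(n,1)$ — which for $\mathrm{SU}(n+1)$-type groups are exactly $\zeta(k)$ for even $k$ and $L_\ell(k)$ for odd $k$, i.e. the $Z_\ell(k)$. The local factors at the finitely many primes where $\G(\Q_p)$ is not quasi-split produce the correction $\prod_{p\in T_\ell}\lambda_p$; when $n$ is even the Euler-factor/functional-equation juggling introduces a genuinely non-sharp contribution absorbed into the $O$-term, which is why the odd case is clean and the even case is only asymptotic. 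The distinction between exponents $\tfrac{(n-1)(n+2)}{4}$ and $\tfrac{n(n+3)}{4}$ comes from $\dim G$ parity feeding into the archimedean $\Gamma$-factors in Prasad's formula.

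Concretely I would proceed in this order: (i) recall the trace formula of \cite{warner} and specialize $f$ to an integrable-discrete-series matrix coefficient; (ii) show the integrability condition $\tau_1>\cdots>\tau_n>\tau_{n+1}+n$ forces all non-identity geometric terms to vanish for $n$ odd and to be $O(h_m/m^n)$ for $n$ even, getting $m(\Gamma(m),\pi_\tau)=d_\tau\vol(\Gamma\backslash G)h_m$ (up to the error); (iii) plug in Harish-Chandra's formal-degree formula for $\su(n,1)$ with the root system of Section \ref{harish}; (iv) plug in Prasad's volume formula for the principal arithmetic lattice $\Gamma$, identifying the $L$-values as $Z_\ell(k)$, reading off the power of $|D_\ell|$, and packaging the non-quasi-split local factors as $\prod_{p\in T_\ell}\lambda_p$ via \eqref{lambda}; (v) combine and simplify constants. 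The main obstacle is step (iv): correctly matching the abstract $L$-functions and Tamagawa/Galois-cohomology normalizations in Prasad's formula to the concrete $\zeta$ and $L_\ell$ values, getting the power of $|D_\ell|$ exactly right in both parities, and controlling the non-quasi-split primes precisely enough that the odd-$n$ formula is an honest equality rather than an estimate; step (ii)'s vanishing of orbital integrals for integrable discrete series is standard but must be invoked carefully, and the even-$n$ error term must be tracked through to give the stated $O(h_m/m^n)$.
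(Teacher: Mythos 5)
Your skeleton matches the paper's: the result is assembled from (a) the Osborne--Warner/DeGeorge trace-formula statement (the paper simply cites it as Theorem \ref{multiplicity}), (b) Harish-Chandra's formal degree formula \eqref{formaldegree} with Mili\v{c}i\'c's integrability criterion giving the condition $\tau_1>\cdots>\tau_n>\tau_{n+1}+n$, and (c) Prasad's volume formula for the principal arithmetic lattice, yielding Theorem \ref{volume-final} with the $Z_\ell(k)$, the power of $|D_\ell|$, and the correction $\prod_{p\in T_\ell}\lambda_p$. However, two points in your plan are genuinely off. First, you attribute the even-$n$ error term to ``Euler-factor/functional-equation juggling'' in the volume computation; this is wrong, and contradicts your own first paragraph. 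Prasad's formula gives an \emph{exact} value of $\vol(\Gamma\backslash G)$ for both parities (Theorem \ref{volume-final}); the $O(h_m/m^n)$ term has nothing to do with the arithmetic volume and comes entirely from the cuspidal contribution $\kappa\,\dim(E_{\tau-\delta_K})\sum_i \vol(\Gamma\cap N^i\backslash N^i)/\vol(\Gamma_2^i\backslash N_2^i)^n$ in Theorem \ref{multiplicity}, which is present exactly when $n$ is even (a parity phenomenon in DeGeorge's theorem, not something your integrability hypothesis ``kills'' for odd $n$). Also, for $n$ even the set $T_\ell$ is empty (every $\mathrm{G}(\Q_p)$ is quasi-split), which is why no $\lambda_p$ appears in that case --- not because those factors get absorbed into the $O$-term.

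Second, and more seriously, your plan contains no mechanism that actually produces the decay in $m$: as stated you would only obtain a bound $O(h_m)$ for the cusp term, not $O(h_m/m^n)$. The paper's key step (Section \ref{errorterm}) is the lower bound \eqref{volume-N2}: writing an element of $\Gamma(m)\cap N_2^j$ as $(k^j)^\dagger u_2(\mu)k^j$, its $(n+1,n+1)$-entry is $1+i\mu$, and the congruence condition modulo $m$ (after restriction of scalars) forces $\mu\in\sqrt{k}\,m\Z$; hence
\begin{align*}
\vol\bigl(\Gamma(m)\cap N_2^j\backslash N_2^j\bigr)\;\geq\;\sqrt{\tfrac{k}{8(n+1)}}\;m,
\end{align*}
while the numerators $\vol(\Gamma(1)\cap N^j\backslash N^j)$ are independent of $m$ (the reduction from the cusps of $\Gamma(m)$ to those of $\Gamma(1)$, with the factor $h_m$, is taken from the proof of DeGeorge's Theorem 10). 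Raising this to the $n$-th power in the denominator is precisely what yields $C\,h_m/m^n$. Without this estimate --- or some substitute quantifying how the depth of $\Gamma(m)$ in the center of the unipotent radical grows with $m$ --- the even-$n$ statement of the theorem is not reached.
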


As the first application of this multiplicity formula, let $K$ be a maximal compact subgroup of $G$ and let $X\coloneqq G/K$ be the Riemannian symmetric space associated to $G$. As we choose $m \geq 3$, the arithmetic subgroup $\Gamma(m)$ of $G$ is torsion free. Then 
$$
S(\Gamma(m))\coloneqq\Gamma(m) \backslash X
$$
is a locally symmetric space and a smooth  manifold of finite volume. 
Let $E$ be an irreducible, finite dimensional representation of $G$.
We denote by $\Tilde{E}$ the locally constant sheaf on $S(\Gamma(m))$ induced by $E$. 
Recalling the definition of cuspidal cohomology (\cite[1.2]{Harald}), we have 
$$
H_{cusp}^* (S(\Gamma(m)),\Tilde{E})\coloneqq H^*(\mathfrak{g},K,L_{cusp}^2(\Gamma(m) \backslash G)^\infty \otimes E).
$$
Here $L_{cusp}^2(\Gamma(m) \backslash G)$ denotes the space of square integrable functions $f:\Gamma(m) \backslash G \rightarrow \C$ that are cuspidal, i.e.
$$
\int_{\Gamma(m) \cap N \backslash N} f(ng)\,\mathrm{d}n =0 \qquad \text{for almost all } g\in G,
$$
where $N$ denotes the unipotent radical of the unique proper $\Q$-parabolic subgroup of $G$ and the cohomology on the right hand side is the $(\mathfrak{g},K)$-\textit{cohomology} (\cite[Section I.1.2]{borel}).
Now by \cite[Proposition 5.6]{laplacian} we have
$$
H_{cusp}^* (S(\Gamma(m)),\Tilde{E})\coloneqq \bigoplus _{\pi \in \hat{G}_{coh}} H^*(\mathfrak{g},K,\pi_{(K)} \otimes E)^{m(\Gamma(m),\pi)},
$$
where $\hat{G}_{coh}$ is well known by the work of Vogan and Zuckerman. The missing piece in trying to understand the cuspidal cohomology are thus the multiplicities $m(\Gamma,\pi)$. We will apply our results to this situation in Section \ref{cuspcoh}, to get a growth estimate for the dimension of this cohomology.

Recall that we denote by $D_\ell$ the discriminant of the quadratic extension $\ell$ and by $L_\ell$ the $L$-function attached to $\ell$. In Section \ref{rationality} we will prove the following as a consequence of the multiplicity formula above, which is Corollary \ref{rational}:

\begin{corollary*}
Let $\ell=\Q(\sqrt{-k})$ be an imaginary quadratic quadratic field. Let $\Gamma^{(1)}$ and $\Gamma^{(2)}$ be the arithmetic subgroups as in Definition \ref{defgamma} of $SU(2n-1,1)$ and $SU(2n+1,1)$ respectively and let $\Gamma^{(i)}(m^{(i)})$ be net principal congruence subgroups (as in Definition \ref{congruence-subgroup}) of index $h_{m^{(i)}}$ in $\Gamma^{(i)}$, with $n\geq 2$ and $m^{(i)}\geq 3$. Let $\pi_{\tau^{(i)}} \in \hat{G}$ be the discrete series representation with Harish-Chandra parameter $\tau^{(i)}$, such that 
\begin{align*}
        \tau_i^{(1)} &= \tau_i^{(2)} \ for\ 1\leq i \leq 2n-1.,  \\
        \tau_1^{(1)} &> \tau_2^{(1)} > \ldots \tau_{2n-1}^{(1)}>\tau_{2n}^{(1)} +2n -1,\\
        \tau_{2n-1}^{(2)} &> \tau_{2n}^{(2)}> \tau_{2n+1}^{(2)} >\tau_{2n+2}^{(2)}+2n+1, 
        \end{align*}
where we set 
\begin{align*}
    \tau_{2n}^{(1)} &\ceq -\sum_{i=1}^{2n-1} \tau_i^{(1)}, \\
    \tau_{2n+2}^{(2)} &\ceq -\sum_{i=1}^{2n+1} \tau_i^{(2)}.
\end{align*}

This implies that $\pi_{\tau^{(i)}}$ is of integrable discrete series. Then
    \begin{align*}
    &\sqrt{|D_\ell|} \frac{L_\ell(2n+1)}{(2\pi)^{2n+1}} \\
      = &\frac{m(\Gamma^{(2)}(m^{(2)}),\pi_{\tau^{(2)}})}{m(\Gamma^{(1)}(m^{(1)}),\pi_{\tau^{(1)}})} \cdot \frac{32h_{m^{(1)}}}{h_{m^{(2)}}}  \frac{\displaystyle\prod_{\substack{1 \leq i <  2n \\ }} (\tau_i^{(1)}-\tau_{2n}^{(1)})}{\displaystyle \prod_{\substack{1 \leq i < j \leq 2n+2 \\ 2n \leq j}}(\tau_i^{(2)}-\tau_j^{(2)})}\\
      & \cdot \prod_{p \in T_\ell^{(1)}} \lambda_p^{(1)} \cdot \prod_{p \in T_\ell^{(2)}}\frac{1}{\lambda_p^{(2)}} |D_\ell|^{2n}  \frac{(2n+2)!}{(-1)^n B_{2n+2}},
    \end{align*}
    where the sets $T_\ell^{(1)}$ and $T_\ell^{(2)}$ contain the primes at which the corresponding special unitary group is not quasi-split. 
In particular, this implies that $\sqrt{|D_\ell|} \frac{L_\ell(2n+1)}{(2\pi)^{2n+1}}$ is a rational number.
\end{corollary*}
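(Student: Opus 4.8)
The plan is to apply the exact (error-term-free) case of the main multiplicity formula to both $\Gamma^{(1)}\subset SU(2n-1,1)$ and $\Gamma^{(2)}\subset SU(2n+1,1)$ --- which is legitimate precisely because $2n-1$ and $2n+1$ are \emph{both odd}, so no $O$-term appears --- and then to extract the $L$-value from the quotient of the two multiplicities. Set $m^{(i)}\ceq m(\Gamma^{(i)}(m^{(i)}),\pi_{\tau^{(i)}})$. The strict inequalities $\tau^{(1)}_{2n-1}>\tau^{(1)}_{2n}+2n-1$ and $\tau^{(2)}_{2n+1}>\tau^{(2)}_{2n+2}+2n+1$ in the hypotheses are exactly the integrability conditions that put $\pi_{\tau^{(i)}}$ in the range of the main theorem (note $2n-1\geq 3$, i.e.\ $n\geq 2$, and $m^{(i)}\geq 3$), so both formulas are available and $m^{(1)},m^{(2)}$ are positive integers.

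First I would write the two instances of the odd-rank formula side by side and divide. Three blocks of factors then collapse. \emph{(i)} The special-value products telescope, since $\prod_{k=2}^{2n+2}Z_\ell(k)\big/\prod_{k=2}^{2n}Z_\ell(k)=Z_\ell(2n+1)\,Z_\ell(2n+2)=L_\ell(2n+1)\,\zeta(2n+2)$ (here $2n+1$ is odd and $2n+2$ even, by the definition of $Z_\ell$). \emph{(ii)} In the Weyl-dimension-type products $\prod_i\bigl|\prod_{j>i}(\tau_i-\tau_j)\bigr|$, the hypothesis $\tau^{(1)}_i=\tau^{(2)}_i$ for $1\leq i\leq 2n-1$ makes every factor $\tau_i-\tau_j$ with $i<j\leq 2n-1$ common to numerator and denominator; cancelling these and using the monotonicity $\tau^{(1)}_1>\dots>\tau^{(1)}_{2n}$, $\tau^{(2)}_1>\dots>\tau^{(2)}_{2n+2}$ to discard the absolute values leaves exactly
\[
\frac{\displaystyle\prod_{1\leq i<2n}\bigl(\tau^{(1)}_i-\tau^{(1)}_{2n}\bigr)}{\displaystyle\prod_{\substack{1\leq i<j\leq 2n+2\\ 2n\leq j}}\bigl(\tau^{(2)}_i-\tau^{(2)}_j\bigr)}.
\]
\emph{(iii)} The remaining constants combine into one bookkeeping factor: the prefactors give $4^{-(2n+1)}/4^{-(2n-1)}=\tfrac1{16}$; the exponent of $|D_\ell|$ is $\tfrac{(2n)(2n+3)}{4}-\tfrac{(2n-2)(2n+1)}{4}=2n+\tfrac12$, so exactly one $\sqrt{|D_\ell|}$ survives once we also carry a power $|D_\ell|^{2n}$; the exponent of $2\pi$ coming from the $\prod_i(2\pi)^{-(i+1)}$ is $\tfrac{(2n-1)(2n+2)}{2}-\tfrac{(2n+1)(2n+4)}{2}=-(4n+3)$; and the indices contribute $h_{m^{(2)}}/h_{m^{(1)}}$ while the non-quasi-split places contribute $\prod_{p\in T^{(2)}_\ell}\lambda_p^{(2)}\big/\prod_{p\in T^{(1)}_\ell}\lambda_p^{(1)}$.

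It then remains to insert Euler's formula $\zeta(2n+2)=\dfrac{(-1)^{n}B_{2n+2}(2\pi)^{2n+2}}{2\,(2n+2)!}$ (with $2n+2=2(n+1)$) and to solve the resulting identity for $\sqrt{|D_\ell|}\,L_\ell(2n+1)/(2\pi)^{2n+1}$: the powers of $2\pi$ collapse, $(2\pi)^{-(4n+3)}\cdot(2\pi)^{2n+2}=(2\pi)^{-(2n+1)}$, matching the denominator on the left, and $|D_\ell|^{2n+1/2}=|D_\ell|^{2n}\sqrt{|D_\ell|}$ isolates the square root. This produces the displayed formula. For the ``in particular'', every factor on the right except $\sqrt{|D_\ell|}$ is rational: $m^{(2)}/m^{(1)}\in\Q$ because both multiplicities are positive integers, $h_{m^{(i)}}\in\Z$, $B_{2n+2}\in\Q$, the $\lambda_p\in\Q$ by their definition, $|D_\ell|^{2n}\in\Z$, and the residual products of $\tau_i-\tau_j$ are rational because the Harish-Chandra parameters of a discrete series of $SU(\cdot,1)$ lie in the appropriate weight lattice and the differences occurring here are integers; hence $\sqrt{|D_\ell|}\,L_\ell(2n+1)/(2\pi)^{2n+1}\in\Q$.

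All of the conceptual content sits in the main theorem, so the corollary is essentially an exercise in constant-tracking, and that is where the only real difficulty lies: one must get the exponents of $|D_\ell|$ and of $2\pi$ exactly right and correctly identify the Weyl-dimension factors that survive the cancellation, since an error in any of these propagates into the final formula. A minor point to verify alongside is that the stated inequalities really do force both $\pi_{\tau^{(i)}}$ into the integrable discrete series (so that the exact, not the asymptotic, form of the main theorem applies) and that $m^{(1)}\neq 0$, so that dividing by it is legitimate; the rationality conclusion itself is robust, as it uses only that each factor other than $\sqrt{|D_\ell|}$ lies in $\Q$.
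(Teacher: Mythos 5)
Your proposal is correct and follows essentially the same route as the paper: apply the odd-case ($O$-term-free) multiplicity formula of Theorem \ref{mult} to $SU(2n-1,1)$ and $SU(2n+1,1)$, divide, cancel the common factors (with the $Z_\ell$-quotient giving $L_\ell(2n+1)\zeta(2n+2)$), insert Euler's formula for $\zeta(2n+2)$, and solve for $\sqrt{|D_\ell|}\,L_\ell(2n+1)/(2\pi)^{2n+1}$. Your constant-tracking (the $1/16$, the $|D_\ell|$-exponent $2n+\tfrac12$, the $(2\pi)^{-(4n+3)}$, and the surviving $\tau$-differences) matches the paper's computation.
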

This reflects the rationality result of Siegel in the case of a totally real field $\ell$ of degree $d$:
\begin{align*}
    \frac{\zeta_{\ell}(2n)}{(2\pi i )^{2nd} D_\ell^{\frac{4n-1}{2}}} \in \Q,
\end{align*}
where $\zeta_{\ell}$ denotes the Dedekind zeta function of $\ell$ (\cite[Corollary 3]{hida}). It follows from the functional equation for $L_\ell$ together with the fact that 
\begin{align*}
    L(1-n,\chi_\ell) \in \Q,\ for\ n\geq 1
\end{align*}
(here $\chi_\ell$ denotes the Kronecker-symbol $\left( \frac{D_\ell}{\cdot} \right)$). For imaginary quadratic fields $\ell$, the same is true due to \cite[Theorem 4.2]{wash}, which implies that $\sqrt{|D_\ell|} \frac{L_\ell(2n+1)}{(2\pi)^{2n+1}}$ is rational. However as a novelty, the above formula is explicit and relies only on the integers $m(\Gamma(m^{(i)}),\pi_{\tau^{(i)}})$ and $h_{m^{(i)}}$.

Let us give a short outline of how this article is structured: In Section \ref{gen}, we explain some general features of the special unitary group under consideration, describe the Harish Chandra parametrization of discrete series representations and present how the Selberg trace formula gives the desired multiplicities in this case. In Section \ref{prasad}, we apply Prasad's volume formula to compute $\mathrm{vol}(\Gamma \backslash G)$, which appears in the multiplicity formula. In Section \ref{error} we treat the remaining terms by giving a growth estimate on them. Sections \ref{cuspcoh} and \ref{rationality} contain the applications to cuspidal cohomology and the rationality of values of L-functions respectively, that we mentioned above.

\textbf{Acknowledgment.} The author would like to thank Harald Grobner for his guidance and support throughout this project. Moreover, he is grateful to Johannes Droschl for his helpful comments.

\section{Notation and Preliminaries}

Let $k$ be a square-free positive integer. For the imaginary quadratic extension $\ell=\Q(\sqrt{-k})$ of $\Q$ we write $\overline{\cdot}$ for the non-trivial automorphism of $\ell$.  We denote by $D_\ell$ the discriminant of $\ell$, given by
\begin{align*}
    D_\ell \ceq \left\{  \begin{matrix}
        -k & if\ k \equiv 3 \mod{4} \\
        -4k & \text{else}.
    \end{matrix} \right.
\end{align*}
For a non-split prime $\mathfrak{p}$ of $l$, the localization $l_{\mathfrak{p}}$ is a quadratic extension of $\Q_p$ and we simply write $l_{(p)}=l_p$. We use $L$ to denote a general quadratic extension of $\Q_p$. \\
If $X$ is a matrix with entries in $\C$, we denote by $X^t$ the transpose of $X$ and by $\overline{X}$ the matrix where the automorphism $\overline{\cdot}$ is applied to every entry. Finally, we denote by $X^\dagger \coloneqq\overline{X^t}$ the conjugate transpose. \\
We denote the set of $m\times n$ Matrices with entries in the ring $R$ by $M_{m\times n}(R)$ and specify $M_n(R)\coloneqq M_{n \times n}(R)$. For the $n \times n$ identity matrix we write $I_n$.
If $B_1,...,B_k$ are quadratic matrices, 
$$
\mathrm{diag}(B_1,...,B_k)
$$
denotes the block diagonal matrix with blocks $B_i$. We denote by $E_{ij}$ the elementary matrix 
$$
\left( \begin{matrix}
    0 &  \ldots  & \ldots & 0 \\
    \vdots & \ddots & 1 & \vdots \\
    0 & \ldots & \ldots & 0
\end{matrix}\right),
$$
which has the unique non-zero entry $1$ at row $i$ and column $j$. When we talk about algebraic groups, we denote them as $\mathrm{G}$, $\mathrm{SL}_n$, $\mathrm{SU}(n,1)$ and so on. Over a field $F$, we denote the $F$-points of $\mathrm{G}$ by $\mathrm{G}(F)$. For instance, over non-archimedian local fields $\Q_p$, we write $\mathrm{G}(\Q_p)$ for the $\Q_p$-points. When we consider the real points of such an algebraic group, we obtain a real Lie group and simply write $G=\mathrm{G}(\R)$, $SL_n=\mathrm{SL}_n(\R)$, $SU(n,1)=\mathrm{SU}(n,1)(\R)$. 
 
\section{Multiplicities in the Cuspidal Spectrum} \label{gen}

\subsection{Special Unitary Groups}

Consider an imaginary quadratic field extension $\ell\ceq\mathbb{Q}(\sqrt{-k})$ of $\Q$, where $k\in \Z_{>0}$ is square-free. We endow the vector space $V=\ell^{n+1}$ with a non-degenerate hermitian form $h:V \rightarrow V$. When we define
$$
S_h\ceq(h(x_i,x_j))_{(ij)}
$$
for a basis $x_1,...,x_n$ of $V$, the special unitary group corresponding to $h$ 
is the algebraic group given by
$$
\su (S_h)(\Q)\ceq\{X\in M_{n+1} (\ell) |\ X^\dagger S_h X =S_h,\  \det(X)=1\}.
$$
In this article, we will be interested in the hermitian form given by 
$$
S_{n,1}\coloneqq \left(\begin{matrix}
I_n  & 0\\
0  & -1
\end{matrix}\right)
$$
and also denote the corresponding special unitary group $\su (S_{n,1})$ by $\su (n,1)$.
The paramters $n$ and $1$ indicate that the matrix $S$ represents a hermitian form of signature $(n,1)$. More generally, we denote by 
$$
\su (n,k)\ceq \su(S_{n,k}),
$$
where
$$
S_{n,k}=\left(\begin{matrix}
I_n & 0 \\
0 & -I_k
\end{matrix}\right),
$$
with $n,k \geq 0$. The group $\su(n,k)$ is a real form of the group $\mathrm{SL}(n+k)$ and is a simply connected (\cite{milne} 21.43), almost simple group. The special unitary groups always have a unique (up to isomorphism) quasi-split inner form. They are the two groups $\su(n,n)$ and $\su(n+1,n)$: \\
 If $n+k$ is even, the group $\su\left(\frac{n+k}{2},\frac{n+k}{2}\right)$ is the unique (up to isomorphism) quasi-split inner form of $\su(n,k)$.  If $n+k$ is odd, the group $\su\left(\frac{n+k+1}{2},\frac{n+k-1}{2}\right)$ is the unique (up to isomorphism) quasi-split inner form of $\su(n,k)$. Throughout this article, we write $\mathrm{G}=\su(n,1)$ for $n\geq 3$.

\subsection{Root Spaces}
\label{prelims}

We denote the special unitary group of signature $(n,1)$ by:
$$
\mathrm{G}(\Q)=\su(n,1)(\Q)=\{X \in M_{n+1}(\ell) |\ X^\dagger S_{n,1} X = S_{n,1},\ \det(X)=1 \}
$$
over $\Q$. The real Lie group $G=SU (n,1)$ is a semisimple, connected Lie group with finite center. The corresponding real Lie algebra is
$$
\mathfrak{g}_0\coloneqq\mathfrak{su}(n,1)=\{X \in M_{n+1}(\C) |\ X^\dagger S_{n,1}  =-S_{n,1} X,\ \tr(X)=0 \}.
$$
This immediately shows that 
$$
\dim_{\R}(\mathfrak{su}(n,1))= n^2+2n.
$$
As the maximal compact subgroup $K$ we choose
\begin{align*}
    K:&=U_{n+1} \cap G  \\ &=
\{X \in M_{n+1}(\C) |\ X^\dagger S_{n,1} X = S_{n,1},\ X^\dagger X = 1,\ \det(X)=1 \} \cong U_n,
\end{align*}
where 
\begin{align*}
    U_n=\{X \in M_n(\C)|\ X^\dagger X = 1\}.
\end{align*}
The corresponding Lie algebra is
$$
\mathfrak{k}_0=\{X \in M_{n+1}(\C) |\ X^\dagger S_{n,1}  =-S_{n,1} X, X^\dagger = -X,\ \tr(X)=0 \},
$$
with 
$$
\dim(\mathfrak{k}_0)=n^2.
$$
The group $\su (n,1)$ has absolute rank $n$, since $\mathrm{SL}_{n+1}$ has rank $n$, which is the same as the rank of $\mathrm{K}$. This implies that discrete series representations exist, as we will discuss below. A maximal torus $\mathrm{T}$ of $\mathrm{G}$ is given by the subset of diagonal matrices in $\mathrm{G}$, with corresponding Lie algebra $\mathfrak{t}_0$ of the real group $T$.
Note that $T\subseteq K$ and that $T$ contains the one-paramter subgroup specified in \cite[p. 86]{degeorge}.
Our maximal $\R$-torus will be 
$$
A\coloneqq\left\{ \left(\begin{matrix}
a & 0  & b \\
0 & I_{n-1} & 0 \\
b & 0  & a
\end{matrix}\right) | a,b\in \R,\ \ a^2-b^2 = 1 \right\} \cong \R^{\times},
$$
hence $G$ has $\R$-rank $1$.
The corresponding Lie algebra is 
$$
\mathfrak{a}_0=\left\{ \mu(E_{n+1,1}+E_{1,n+1} )|\ \mu \in \R\right\}.
$$
We denote by $\mathfrak{g}, \mathfrak{k}$ and $\mathfrak{t}$ the complexifications of $\mathfrak{g}_0, \mathfrak{k}_0$ and $\mathfrak{t}_0$ respectively. I.e. $\mathfrak{g}= \mathfrak{sl}_{n+1}(\C)$ and $\mathfrak{k}\cong \mathfrak{gl}_n(\C) $. We let $\theta$ be the Cartan involution associated to the maximal compact group $K$, let $\mathfrak{k}\oplus \mathfrak{p}$ be the corresponding Cartan decomposition and $\mathrm{B}(X,Y)$ the Killing form on $\mathfrak{g}$, which is given by
$$
\mathrm{B}(X,Y)=2(n+1)\tr (XY).
$$
We also denote by 
$$
\mathrm{B}_0(X,Y)=\tr (XY)
$$
the corresponding trace form (\cite[§IV.2]{knapp}). We have the Iwasawa decomposition $G=KAN$. Let $P_0=MAN$ be the minimal parabolic subgroup, where $M$ is the centralizer of $A$ in $K$. As in \cite{degeorge}, we normalize the Haar measure on $G$ such that $K$ and $M$ have volume $1$, all discrete subgroups have counting measure and on any subspace $V\subseteq \mathfrak{n}$ we put the measure induced by the Euclidean structure $(X,Y)=-B(X,\theta Y)$ and on $\mathrm{exp}(V)$ the measure induced by $\mathrm{exp}$. On $G$, the measure is given by
\begin{align*}
    \int_G f(g)\, \mathrm{d}g = \frac{1}{\sqrt{2}} \int_K \int_A \int_N f(kan) e^{\rho (\log (a))} \, \mathrm{d}k\mathrm{d}a \mathrm{d}n,
\end{align*}
for $f\in C_c^{\infty}(G)$, where $\rho(H)\coloneqq \frac{1}{2} \tr(ad(H)|_{\mathfrak{n}})$ for $H \in \mathfrak{a}$.

We will first study the root system of $(\mathfrak{g}_0,\mathfrak{a}_0)$ and later that of the complexifications $(\mathfrak{g},\mathfrak{t})$, which is necessary for the parametrization of the discrete series. So let now $\Delta$ be the set of roots of $(\mathfrak{g}_0,\mathfrak{a}_0)$ with a chosen ordering, and let $\Delta^+$ be the positive roots with respect to that ordering. Let 
$$
\mathfrak{g}_0 = (\mathfrak{g}_0)_0 \oplus \sum_{\alpha \in \Delta^+} \mathfrak{g}_\alpha \oplus \sum_{\alpha \in \Delta^+} \mathfrak{g}_{-\alpha}
$$
be the root space decomposition with $(\mathfrak{g}_0)_0=C_{\mathfrak{g}_0}(\mathfrak{a}_0)$. We have a unique simple root, namely $\lambda$, which is given by the map
$$
\mu(E_{n+1,1}+E_{1,n+1} ) \mapsto \mu.$$
$\lambda$ has the root space 
$$
\mathfrak{g}_1\coloneqq\mathfrak{g}_{\lambda} = \left\{ 
\left(\begin{matrix}
0 & x_{12} & \ldots & x_{1n} & 0 \\
-\overline{x_{12}} &  &  &  & \overline{x_{12}} \\
\vdots &  & 0 &  & \vdots  \\
-\overline{x_{1n}} &  &  &  & \overline{x_{1n}} \\
0 & x_{12} & \ldots & x_{1n} & 0 \\
\end{matrix}\right) | \ x_{ij} \in \C
\right\}.
$$
Also 
$$
\mathfrak{g}_2 \coloneqq \mathfrak{g}_{2\lambda} = \left\{ 
\left(\begin{matrix}
-i\mu & 0 & \ldots & 0 & i\mu \\
0 &  &  &  & 0\\
\vdots &  & 0 &  & \vdots \\
0 &  &  &  & 0\\
-i\mu & 0 & \ldots & 0& i\mu \\
\end{matrix}\right) | \ \mu \in \R
\right\}
$$

and 

\begin{align*}
    (\mathfrak{g}_0)_0 = \left\{ 
\left(\begin{matrix}
i\mu_1 & 0 & \ldots & 0 & \mu_2 \\
0 &  & &  & 0 \\
\vdots &  & X &  & \vdots \\
0 &  & &  & 0 \\
\mu_2 & 0 & \ldots & 0 & i\mu_1 \\
\end{matrix}\right) | \ \mu_i \in \R, X \in M_{n-1}(\C)
\right\}.
\end{align*}
 We have $N=\mathrm{exp}(\mathfrak{g}_1 \oplus \mathfrak{g}_2)$ and set $N_1\coloneqq\mathrm{exp}(\mathfrak{g}_1)$ and $N_2\coloneqq\mathrm{exp}(\mathfrak{g}_2)$. We now look at the root systems for the complexifications of the corresponding Lie algebras, as this allows us to parametrize the discrete series representations of $G$. The maximal torus $T$ in $G$ has the real Lie algebra
$$
\mathfrak{t}_0=\left\{\mathrm{diag}(i\lambda_1, \ldots, i\lambda_{n+1}) |\ \lambda_i \in \R,\  \lambda_1 + \ldots \lambda_{n+1}=0 \right\}
$$
with complexification
$$
\mathfrak{t}=\left\{\mathrm{diag}(\lambda_1, \ldots, \lambda_{n+1}) |\ \lambda_i \in \C,\  \lambda_1 + \ldots \lambda_{n+1}=0 \right\}.
$$
We denote by $i\mathfrak{t}_0^*$ the space of all linear forms on $\mathfrak{t}$, which assume imaginary values on $\mathfrak{t}_0$. 
Denote by $\Delta_G$ and $\Delta_K$ the root systems for the pairs $(\mathfrak{g},\mathfrak{t})$ and $(\mathfrak{k},\mathfrak{t})$ respectively. They are subsets of $i\mathfrak{t}_0^*$. We call $\Delta_K$ the compact roots, and $\Delta_n \coloneqq \Delta_G \setminus \Delta_K$ the non-compact roots. Let $\Delta_G^+$ be a set of positive roots for $\Delta_G$ and $\Delta_K^+$ the positive roots of $\Delta_K$, such that $\Delta_K^+ \subset \Delta_G^+$. 
Following \cite{knapp} (IV, §2, page 68), we denote by $\varepsilon_i:\mathfrak{t}\rightarrow \C$ the map
$$
\varepsilon_i(\mathrm{diag}(\lambda_1,...,\lambda_{n+1}))=\lambda_i,
$$
for $1\leq i \leq n+1$. Since $\varepsilon_{n+1}=-\sum_{i=1}^n \varepsilon_i$, we have that $\{\varepsilon_i\}_{i=1}^n$ forms a basis for $\mathfrak{t}^*$ and consequently also for $i\mathfrak{t}_0^*$. We often write elements of this space in coordinates with respect to this basis, i.e. 
\begin{align*}
    (x_1,\ldots,x_n)_{\{\varepsilon_i\}} \ceq \sum_{i=1}^n x_i \varepsilon_i.
\end{align*}
\\
The root spaces are given by
\begin{align*}
    \Delta_G&=\{\varepsilon_i-\varepsilon_j|\ i \neq j, 1\leq i,j\leq n+1\}, \\
\Delta_K&=\{\varepsilon_i-\varepsilon_j|\ i \neq j, 1\leq i,j\leq n\}, \\
\Delta^+_G&=\{\varepsilon_i-\varepsilon_j| 1\leq i < j\leq n+1\}, \\
\Delta^+_K&=\{\varepsilon_i-\varepsilon_j| 1\leq i < j\leq n\}.
\end{align*}
To a root $\varepsilon_i - \varepsilon_j$ we associate an element $H_{\varepsilon_i - \varepsilon_j}$ in $i\mathfrak{t}_0$ such that $B_0(H,H_{\varepsilon_i - \varepsilon_j})=(\varepsilon_i - \varepsilon_j)(H)$ for all $H \in i\mathfrak{t}_0$.
Note that $H_{\varepsilon_i-\varepsilon_j}=E_{ii}-E_{jj}$.
Let $(\cdot,\cdot)$ denote the inner product on $i\mathfrak{t}_0^*$ induced by the trace form $B_0$, i.e. for two roots $\varepsilon_i-\varepsilon_j$ and $\varepsilon_k-\varepsilon_l$ we define $(\varepsilon_i-\varepsilon_j,\varepsilon_k-\varepsilon_l)\coloneqq(\varepsilon_i-\varepsilon_j)(H_{\varepsilon_k-\varepsilon_l})=(\varepsilon_k-\varepsilon_l)(H_{\varepsilon_i-\varepsilon_j})=\delta_{ik}-\delta_{il}-\delta_{jk}+\delta_{jl}$. That means, with respect to the inner product on $i\mathfrak{t}_0^*$, the $\varepsilon_i$ form an
orthonormal system, i.e.
\begin{align*}
    ((\tau_1,\ldots,\tau_n)_{\{\varepsilon_i\}},(\lambda_1,\ldots,\lambda_n)_{\{\varepsilon_i\}})=\tau_1 \lambda_1 + \ldots + \tau_n \lambda_n.
\end{align*}

Let
\begin{align*}
    \delta_G &\coloneqq \frac{1}{2} \sum_{\alpha \in \Delta_G^+} \alpha, \\
    \delta_K &\coloneqq \frac{1}{2} \sum_{\alpha \in \Delta_K^+} \alpha, \\
    \omega_G(\cdot ) &\coloneqq \prod_{\alpha \in \Delta_G^+}   (\cdot, \alpha),  \\
    \omega_K(\cdot ) &\coloneqq \prod_{\alpha \in \Delta_K^+}   (\cdot, \alpha).
\end{align*}
Also, set $\delta_n\coloneqq\delta_G-\delta_K$. Denote by $W_G$ and $W_K$ the corresponding Weyl groups.\\
We have $\omega_K(\delta_K) = \prod_{j=1}^{n} ((j-1)!)$ and
\begin{align*}
\delta_G 
&= \sum_{i=1}^{n} (n+1-i) \varepsilon_i = (n,n-1,n-2,...,1)_{\{\varepsilon_i\}}.
\end{align*}
With these preliminaries about root space, we are in a position to describe the discrete series representations of $G$.
\begin{de}
    An irreducible unitary representation $\pi$ of $G$ is said to be a \textit{discrete series} representation of $G$, if the matrix coefficient $\omega_\pi$ associated to $\pi$ satisfies
\begin{align*}
    \int_G \lVert \omega_\pi\rVert ^2\ dg < \infty. 
\end{align*}
To $\pi$ we may associated a \textit{formal degree} $d_\pi$, defined as the unique positive real number such that 
\begin{align*}
    \int_G \langle \pi(u_1),v_1\rangle \overline{\langle \pi(u_2),v_2\rangle} dg = d_\pi^{-1}\langle u_1,u_2\rangle \langle v_1,v_2 \rangle. 
\end{align*}
We say that $\pi$ is \textit{integrable}, if 
\begin{align*}
    \int_G \lVert \omega_\pi \rVert\ dg < \infty. 
\end{align*}
\end{de}
We define
$$
L_T' \ceq  \{ \tau \in i\mathfrak{t}_0^*|\ (\tau,\alpha) > 0 \ \forall \alpha \in \Delta_G^+,\ \tau+\delta_G\ \text{analytically integral}\}
$$ 
a subset of $i\mathfrak{t}_0^*$ of non-singular elements. For each $\tau  \in L_T'$ we have a discrete series representation $(\pi_\tau,H_\tau)$ as described by Harish-Chandra (\cite[ IX, §7 Theorem 9.20 p.310]{knapp}):


\begin{thm} (Harish-Chandra) \label{harish}\\
Let $\tau \in L'_T $. Then there exists a discrete series representation $\pi_\tau$ of $G$ such that $\pi_\tau \mid_K$ contains with multiplicity one the $K$ type with highest weight 
$$
\Lambda=\tau+ \delta_G -2\delta_K.
$$
$\tau$ is called the Harish-Chandra parameter of that representation.
\end{thm}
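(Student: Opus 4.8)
The plan is to reduce the statement to Harish-Chandra's general construction of the discrete series, supplying only the input that is special to $G=SU(n,1)$. First I would record that the discrete series of $G$ is nonempty: the diagonal torus $T\subseteq K$ of Section~\ref{prelims} is a compact Cartan subgroup of $G$, so $\mathrm{rank}(G)=\mathrm{rank}(K)=n$, and by Harish-Chandra this equal-rank condition is precisely what is needed for square-integrable representations to exist.

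Next, for a fixed $\tau\in L_T'$ — so that $\tau$ is $\Delta_G^+$-dominant, regular, and $\tau+\delta_G$ is analytically integral on $T$ — I would invoke Harish-Chandra's theorem attaching to $\tau$ an irreducible square-integrable representation $\pi_\tau$, unique up to unitary equivalence, whose global character on the regular set of the compact Cartan is the alternating sum $(-1)^{q}\sum_{w\in W_K}\det(w)\,e^{w\tau}$ divided by the Weyl denominator $\prod_{\alpha\in\Delta_G^+}\bigl(e^{\alpha/2}-e^{-\alpha/2}\bigr)$, where $q=\tfrac12\dim(G/K)=n$, and whose infinitesimal character corresponds to $\tau$ under the Harish-Chandra isomorphism. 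Producing an honest unitary representation that realizes this invariant eigendistribution — as opposed to the eigendistribution alone — is the analytically hard point, and the one I would not reprove: it is Harish-Chandra's theorem, equivalently Schmid's realization of $\pi_\tau$ on the $L^2$-Dolbeault cohomology of the flag manifold $G/T$, or the description of $\pi_\tau$ by cohomological induction from a Borel subalgebra. I would cite it as \cite[IX,~\S7, Theorem 9.20]{knapp}; this is the step I expect to be the main obstacle if one wants a self-contained argument.

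With $\pi_\tau$ in hand, the remaining task is the multiplicity-one assertion for the $K$-type of highest weight $\Lambda=\tau+\delta_G-2\delta_K$, and for this I would use Blattner's formula (the Hecht--Schmid theorem). One obtains it from the character formula above by expanding $\prod_{\alpha\in\Delta_n^+}\bigl(e^{\alpha/2}-e^{-\alpha/2}\bigr)^{-1}=e^{-\delta_n}\sum_{\nu}\mathcal P(\nu)\,e^{-\nu}$ — with $\mathcal P$ Kostant's partition function for the non-compact positive roots $\Delta_n^+$ and $\delta_n=\delta_G-\delta_K$ — and comparing with the Weyl character formula for $K$: the multiplicity of the $K$-type of highest weight $\mu$ in $\pi_\tau|_K$ equals $\sum_{w\in W_K}\det(w)\,\mathcal P\bigl(w(\mu+\delta_K)-(\tau+\delta_n)\bigr)$.

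The decisive simplification for $SU(n,1)$ is that everything here is explicit: $\Delta_n^+=\{\varepsilon_i-\varepsilon_{n+1}:1\le i\le n\}$ is stable under $W_K=\mathfrak S_n$, one computes $\delta_n=\tfrac{n+1}{2}\sum_{i=1}^n\varepsilon_i$, and $\tau+\delta_n$ is $\Delta_G^+$-dominant and regular since $(\delta_n,\alpha)\ge 0$ for every $\alpha\in\Delta_G^+$. Taking $\mu=\Lambda=\tau+\delta_n-\delta_K$ — which is $\Delta_K^+$-dominant and analytically integral for $K$, because $\tau_i-\tau_{i+1}\in\Z_{\ge1}$ and $\tau+\delta_G$ is analytically integral on the torus $T$ common to $G$ and $K$ — the argument of $\mathcal P$ collapses to $w(\tau+\delta_n)-(\tau+\delta_n)$: for $w=e$ this is $0$ with $\mathcal P(0)=1$, while for $w\ne e$ it is a nonzero non-positive combination of $\Delta_G^+$-simple roots (as $\tau+\delta_n$ is dominant and regular) and therefore lies outside $\sum_{\beta\in\Delta_n^+}\Z_{\ge0}\beta$, where $\mathcal P$ vanishes. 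Hence the multiplicity at $\Lambda$ is exactly $1$, which is the assertion; the same positivity shows in addition that $\Lambda$ is the minimal $K$-type of $\pi_\tau$. In short, the entire difficulty sits in the existence-and-unitarity step of the second paragraph, while the $K$-type bookkeeping — a genuine theorem in general — reduces in this rank-one case to the elementary partition-function computation above.
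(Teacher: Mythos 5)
Your proposal rests on exactly the same source as the paper: Theorem \ref{harish} is quoted from Harish-Chandra's theorem as stated in \cite[IX, \S 7, Theorem 9.20]{knapp}, and the paper offers no proof beyond that citation, which already contains both existence and the multiplicity-one statement for the $K$-type $\Lambda=\tau+\delta_G-2\delta_K$. Your supplementary Blattner/Hecht--Schmid computation (with $\delta_n=\tfrac{n+1}{2}\sum_{i=1}^n\varepsilon_i$, $\tau+\delta_n$ dominant regular, and the partition-function argument collapsing to the $w=e$ term) is correct, but it re-derives a part of the cited theorem rather than taking a genuinely different route.
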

With our normalisation of the Haar measure, its formal degree is given by (\cite{degeorge})
\begin{align} \label{formaldegree}
    d_\tau & \ceq d_{\pi_\tau} = \frac{1}{\sqrt{2}}
(2\pi)^{-dim(G/K)/2} \ 2^{-(dim(G/K)-1)/2}\  \frac{|\omega_G(\tau) |}{\omega_K(\delta_K)}  \nonumber \\
&=  \frac{1}{(4\pi)^n} \frac{|\omega_G(\tau) |}{\omega_K(\delta_K)} = \frac{1}{(4\pi)^n} \prod_{i=1}^n  \frac{\left|\prod_{j=i+1}^{n+1} (\tau_i-\tau_j)\right|}{(i-1)!} .
\end{align}
We will now choose Harish-Chandra parameters $\tau$ in such a way, that 
$\pi_\tau$ is integrable. 
In the process, we will concurrently compute the formal degree $d_\tau$, which will be needed later. For a Harish-Chandra parameter $\tau$ we denote
$$
\tau=\rho +\delta_G= (\rho_1,...,\rho_n)_{\{\varepsilon_i\}}+ \delta_G = (\rho_1+n,\rho_2+n-1,...,\rho_n+1).
$$
If $\tau \in L_T'$, we must have that $\tau_i \in \Z$ and that 
\begin{align*}
    \tau_i-\tau_j=(\tau,\varepsilon_i-\varepsilon_j)>0 \quad \ for\ 1\leq i<j \leq n+1.
\end{align*}
In other words, we have that $\tau_1 > \tau_2 > \ldots >\tau_n>\tau_{n+1}$, where $\tau_{n+1} =- \sum_{i=1}^n \tau_i$.
We specify our choices in such a way, that $\pi_\tau$ is integrable:
According to \cite[Theorem p.60]{milicic}, $\pi_\tau$ is integrable if and only if 
    \begin{align} \label{integr}
        |(\tau,\alpha)|>\frac{1}{4}\sum_{\beta\in\Delta_G} |(\alpha,\beta)|\eqqcolon k(\alpha)
    \end{align}
    for all $\alpha \in \Delta_n=\Delta_G \setminus \Delta_K=\{\pm(\varepsilon_k-\varepsilon_{n+1})|\ 1\leq k \leq n\}$. \\
    We see that $k(\varepsilon_k-\varepsilon_{n+1})=n$. So condition \ref{integr} reads as 
    \begin{align} \label{lambdacond}
        |(\tau,\varepsilon_k-\varepsilon_{n+1})|
        =\tau( H_{\varepsilon_k-\varepsilon_{n+1}}) =&\tau_k + \sum_{i=1}^n \tau_i > n \nonumber\\
    \end{align}
    for all $1\leq k \leq n$. Since $|(\tau,-\alpha)|=|(\tau,\alpha)|$ and $k(-\alpha)=k(\alpha)$, this incorporates condition \ref{integr} for the negative roots as well. In summary, the conditions on $\tau$ are:
    \begin{align*}
        \tau_1 &> \tau_2 > \ldots >\tau_n > \tau_{n+1}+n.
        \end{align*}

\subsection{The Selberg Trace Formula} 

We will use the specification of the Selberg trace formula given in \cite{warner} and \cite{degeorge}, to determine the multiplicities $m(\Gamma,\pi_\tau)$ of a integrable discrete series representation $\pi_\tau$ in the space $R_{\Gamma \backslash G} \big|_{L_d^2(\Gamma \backslash G)}$, as outlined in the introduction. Recall that $N_1=\mathrm{exp}(\mathfrak{g}_1)$, $N_2=\mathrm{exp}(\mathfrak{g}_2)$ and $N=\mathrm{exp}(\mathfrak{g}_1 \oplus \mathfrak{g}_2)$. We have the Iwasawa decomposition  $G=KAN$ and the minimal parabolic subgroup $P_0=MAN$, where $M$ is the centralizer of $A$ in $K$. The $G$-conjugates of $P_0$ give the proper $\R$-parabolics of $G$. We note that $N(P_0)/\Gamma \cap N(P_0)$ is compact. Let $\Gamma$ be an arithmetic subgroup of $G$. We call the $\Gamma$-conjugacy classes $P_\Gamma =\{ P^1,...,P^r \} $ of $\Gamma$-cuspidal parabolic subgroups the cusps of $G$, where $P^1=P_0$. 
For $P^i \in P_\Gamma$ let $N^i\coloneqq N(P^i)$. Then $N^i=k_i N k_i^{-1}$ and $P^i=M^iA^iN^i$ with $k_i\in K$. 
Let $\Gamma_1$ be the image of $\Gamma \cap N$ under the natural projection of $N=N_1N_2$ onto the first factor. We write $N_2^i=k_iN_2k_i^{-1}$ and $\Gamma_2^i\coloneqq N_2^i \cap \Gamma$. 

\begin{de}
    Let $G$ be a subgroup of the real group $\mathrm{GL}_n(\R)$. We say that $G$ is \textit{net} if for each $g\in G$ the multiplicative subgroup of $\C^*$ that is generated by the eigenvalues of $g$ is torsion-free.
\end{de}
See Chapter III Section 17 in \cite{borelgroups} for this definition. Ibidem from Proposition 17.4, it follows that any arithmetic subgroup of $G$ has a congruence subgroup that is net.  

\begin{thm} \label{multiplicity} (Theorem 6 in \cite{degeorge}) \\
Let $\Gamma$ be a net arithmetic subgroup of $G=SU(n,1)$. Let $\pi_\tau \in \hat{G}$ be of integrable discrete series with Harish-Chandra parameter $\tau$, where $\tau \in L_T'$. Then 
$$
m(\Gamma,\pi_\tau)=d_\tau \mathrm{vol}(\Gamma  \backslash G) 
$$
if $n$ is odd and
$$
m(\Gamma,\pi_\tau)=d_\tau \mathrm{vol}(\Gamma  \backslash G) +  \kappa \cdot \dim (E_{\tau-\delta_K}) \cdot \sum_{i=1}^r \frac{\mathrm{vol}(\Gamma \cap N^i \backslash N^i)}{\mathrm{vol}(\Gamma_2^i \backslash N_2^i)^{n}}
$$
if $n$ is even. 

\end{thm}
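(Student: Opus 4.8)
The plan is to feed a normalised matrix coefficient of $\pi_\tau$ into the Selberg trace formula in the form valid for lattices of finite covolume in $\R$-rank one groups (\cite{warner}, as specialised in \cite{degeorge}), and to match the two sides. Fix a $K$-finite unit vector $v$ in the space of $\pi_\tau$ and put
\[
f_\tau(g)\ceq d_\tau\,\overline{\langle\pi_\tau(g)v,v\rangle},\qquad g\in G.
\]
Since $\pi_\tau$ is integrable, $f_\tau\in L^1(G)$ (it is moreover smooth, bi-$K$-finite and rapidly decreasing), so $R_{\Gamma\backslash G}(f_\tau)$ is defined and the relevant operators are of trace class. First I would record the two normalisations that drive everything: $f_\tau(1)=d_\tau$, and, by the Schur orthogonality relations, $\pi_\tau(f_\tau)$ is the rank-one orthogonal projection onto $\C v$, so that $\mathrm{tr}\,\pi_\tau(f_\tau)=1$; more generally $\pi(f_\tau)=0$ for every square-integrable $\pi\not\cong\pi_\tau$ and for every unitarily induced $\pi$.

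On the spectral side, the trace of $R_{\Gamma\backslash G}(f_\tau)$ on $L_d^2(\Gamma\backslash G)$ is $\sum_{\pi\in\hat{G}}m_d(\Gamma,\pi)\,\mathrm{tr}\,\pi(f_\tau)$, and since $\pi(f_\tau)=0$ for every unitarily induced representation, $R_{\Gamma\backslash G}(f_\tau)$ annihilates the continuous spectrum and the Eisenstein contributions to the trace formula drop out. The decisive point is then the \emph{pseudo-coefficient} property $\mathrm{tr}\,\pi(f_\tau)=\delta_{[\pi],[\pi_\tau]}$ for all $\pi\in\hat{G}$ that can occur: for square-integrable $\pi$ this is Schur orthogonality, while for the non-tempered unitary representations that might appear in $L_d^2(\Gamma\backslash G)$ the vanishing $\mathrm{tr}\,\pi(f_\tau)=0$ is precisely where integrability of $\pi_\tau$ enters, since it supplies enough decay of $f_\tau$ to integrate against the moderate-growth characters $\Theta_\pi$ and conclude the pairing is $0$ (this is one of the inputs we take from \cite{degeorge}). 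Hence the spectral side collapses to $m_d(\Gamma,\pi_\tau)$, and because discrete series never occur in the continuous spectrum, $m_d(\Gamma,\pi_\tau)=m(\Gamma,\pi_\tau)$.

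On the geometric side, the contributions are organised by $\Gamma$-conjugacy classes together with the boundary terms attached to the cusps $P^1=P_0,\dots,P^r$. As $\Gamma$ is net it is torsion-free, so its only elliptic conjugacy class is $\{1\}$, contributing $\mathrm{vol}(\Gamma\backslash G)\,f_\tau(1)=d_\tau\,\mathrm{vol}(\Gamma\backslash G)$; for a semisimple non-elliptic (hyperbolic) class the orbital integral $O_\gamma(f_\tau)=0$, because the orbital integrals of a pseudo-coefficient of an integrable discrete series are supported on the elliptic set — again an input from \cite{degeorge}, \cite{warner}. What remains are the parabolic/cuspidal terms, which are nonzero essentially because the constant term of $f_\tau$ along the proper parabolic does not vanish. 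Evaluating them, using $N=N_1N_2$ with $N_2=\exp(\mathfrak{g}_2)$ the one-dimensional centre, the Jacobian $e^{\rho(\log a)}$ of the Iwasawa decomposition, and the $M$-type occurring in the constant term, reduces matters to a Fourier-analytic computation on $\mathfrak{g}_2\cong\R$; this produces exactly
\[
\kappa\cdot\dim(E_{\tau-\delta_K})\cdot\sum_{i=1}^r\frac{\mathrm{vol}(\Gamma\cap N^i\backslash N^i)}{\mathrm{vol}(\Gamma_2^i\backslash N_2^i)^{n}},
\]
with $\dim(E_{\tau-\delta_K})$ a Weyl-dimension quantity in $\tau$, and — this is the parity dichotomy — the very same computation forces this expression to vanish identically when $n$ is odd (heuristically, $\delta_n=\delta_G-\delta_K=\frac{n+1}{2}\sum_{i=1}^n\varepsilon_i$ is an analytically integral weight exactly when $n$ is odd, and the residue being computed is non-trivial only in the complementary case). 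Putting the two sides together yields the stated formula.

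I expect the genuine obstacle to be this last step: the honest evaluation of the parabolic/cuspidal contribution to the geometric side — both the vanishing of the hyperbolic orbital integrals and the assembling of the surviving boundary terms into the displayed correction, with the correct constant $\kappa$ and the correct dependence on $n \bmod 2$ — which is why one cites \cite{degeorge} and \cite{warner} for it rather than redoing it. Once those facts are granted, the collapse of the spectral side and the identification of the identity term with $d_\tau\,\mathrm{vol}(\Gamma\backslash G)$ are routine.
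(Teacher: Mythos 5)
The paper offers no proof of this statement to compare yours against: Theorem \ref{multiplicity} is imported verbatim (specialized to $G=SU(n,1)$) from Theorem 6 of \cite{degeorge}. Your outline is indeed the Osborne--Warner/DeGeorge strategy behind that theorem --- feed $d_\tau\overline{\langle\pi_\tau(g)v,v\rangle}$ into the trace formula, use Schur orthogonality together with integrability to kill all other unitary contributions on the spectral side, identify the identity term with $d_\tau\,\mathrm{vol}(\Gamma\backslash G)$, and leave the cuspidal/unipotent terms as the correction --- and you delegate exactly the hard analytic steps (vanishing of traces against non-tempered representations, the Selberg principle for hyperbolic orbital integrals, the evaluation of the boundary terms and the parity dichotomy) to \cite{degeorge} and \cite{warner}, which is in substance what the paper itself does by citing the result.

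Two of your supporting heuristics are off, though neither damages the argument since you take the corresponding evaluations from the references. First, the boundary term is not there ``because the constant term of $f_\tau$ along the proper parabolic does not vanish'': by Harish-Chandra, $K$-finite matrix coefficients of discrete series are cusp forms on $G$, so $\int_N f_\tau(xny)\,\mathrm{d}n=0$. The correction in DeGeorge's theorem arises from the finer structure of the unipotent contribution for the two-step group $N=N_1N_2$, essentially from the integral over the one-dimensional center $N_2$, which is exactly why the formula is expressed through $\mathrm{vol}(\Gamma_2^i\backslash N_2^i)$ and not through the full constant term along $N$. Second, your explanation of the parity dichotomy via integrality of $\delta_n=\frac{n+1}{2}\sum_i\varepsilon_i$ is speculative; in \cite{degeorge} the vanishing for $n$ odd (and the precise constant $\kappa$ and factor $\dim E_{\tau-\delta_K}$ for $n$ even) come out of the explicit evaluation of that central $N_2$-term, and you should either reproduce that computation or simply cite it without offering a substitute mechanism.
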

The following terms appear:
\begin{enumerate}
    \item The formal degree $d_\tau$, that we computed above,
    \item The covolume $\mathrm{vol}(\Gamma  \backslash G)$ of $\Gamma$ in $G$, which we will compute in the next Section,
    \item The non-zero constant $\kappa$, defined in \cite[Theorem 6]{degeorge},
    \item The representation $E_{\tau-{\delta_K}}$, which is the irreducible representation of $\mathfrak{k}$ with highest weight $\tau-\delta_K$,
    \item The error terms $$
    \frac{\mathrm{vol}(\Gamma \cap N^i \backslash N^i)}{\mathrm{vol}(\Gamma_2^i \backslash N_2^i)^{n}},
$$ which we will evaluate in Section \ref{error}. 
\end{enumerate}


\section{Prasad's Volume Formula} \label{prasad}
We would like to apply Prasad's Volume Formula (\cite{prasad}, Theorem 3.7) to $\mathrm{G}=\su(n,1)$ with $S=\{\infty\}$ to compute $\mathrm{vol}(\Gamma \backslash G)$. To do this, we will consider principal arithmetic lattices $\Gamma$. We will then look at subgroups $\Gamma(m)$ of $\Gamma$ of finite index $h_m$. We have
$$
\mathrm{vol}(\Gamma(m) \backslash G) = h_m\cdot  \mathrm{vol}(\Gamma \backslash G),
$$
so it suffices to compute $\mathrm{vol}(\Gamma\backslash G)$.

\subsection{The Formula} \label{arithmetic-subgroup}
We interpret $\mathrm{G}_{\Z}$ as the integer points of the $\Q$-algebraic group $\mathrm{G}=\mathrm{SU}(n,1)$ corresponding to the quadratic extension $\ell=\Q(\sqrt{-k})$ of $\Q$. This $\mathrm{G}_{\Z}$ is a lattice in $G$. We call an arithmetic subgroup of $G$ an \textit{arithmetic lattice}, if it is commensurable with $\mathrm{G}_{\Z}$ \cite[§2]{emery}. 
\begin{cor}
    If $n$ is even, the commensurability class of nonuniform arithmetic lattices $\Gamma$ of $G$ is unique. If $n$ is odd, commensurability classes of $G$ are in one-to-one correspondence with $\Q^\times / N_{\ell/\Q}(\ell^\times)$.
\end{cor}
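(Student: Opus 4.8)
The plan is to reduce the classification of commensurability classes to a classification of the underlying $\Q$-forms, and then to read the answer off the classification of hermitian forms over $\ell$. First I would invoke the standard dictionary between arithmetic lattices and $\Q$-groups: every arithmetic subgroup of $G=SU(n,1)$ is commensurable with the group of integer points of some absolutely almost simple, simply connected $\Q$-group $\mathrm{H}$ of type $A_n$ with $\mathrm{H}(\R)\cong SU(n,1)$, and two such lattices are commensurable up to conjugacy in $G$ exactly when the corresponding $\Q$-groups are $\Q$-isomorphic (see \cite{emery} and the references there). Under this dictionary a lattice is nonuniform iff $\mathrm{H}$ is $\Q$-isotropic, so the task becomes the classification of $\Q$-isotropic such $\mathrm{H}$ up to $\Q$-isomorphism.

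Next I would identify these forms. Since $SU(n,1)$ is an \emph{outer} real form of $\mathrm{SL}_{n+1}$, any such $\mathrm{H}$ is a special unitary group $\mathrm{SU}(B,\sigma)$ for a central simple algebra $B$ of degree $n+1$ over a quadratic field extension of $\Q$, with $\sigma$ an involution of the second kind fixing $\Q$; the archimedean place forces this field to be imaginary quadratic, hence our fixed $\ell=\Q(\sqrt{-k})$. If $B$ is a nonsplit division algebra then $\mathrm{SU}(B,\sigma)$ is $\Q$-anisotropic, so $\Q$-isotropy forces $B=M_{n+1}(\ell)$, and then $\mathrm{H}=\mathrm{SU}(h)$ for a nondegenerate hermitian form $h$ on $\ell^{n+1}$, determined up to multiplication by a scalar in $\Q^\times$. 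The condition $\mathrm{H}(\R)\cong SU(n,1)$ amounts to $h$ having signature $(n,1)$ (equivalently $(1,n)$) at $\infty$, and since $n\ge 3$ such an $h$ has rank $\ge 3$ and is automatically isotropic over $\ell$ by the Hasse principle, so the isotropy condition is free here. Hence the nonuniform commensurability classes are in bijection with the similarity classes of rank-$(n+1)$ hermitian forms of signature $(n,1)$ over $\ell$, where $h_1\sim h_2$ means $h_2\cong c\,h_1$ for some $c\in\Q^\times$.

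Then I would run the parity count, using that such a form is classified by its rank, signature, and discriminant $\mathrm{disc}(h)\in\Q^\times/N_{\ell/\Q}(\ell^\times)$ (Landherr's theorem), together with $\mathrm{disc}(c\,h)=c^{\,n+1}\mathrm{disc}(h)$. If $n$ is even then $n+1$ is odd, so $c^{\,n+1}\equiv c$ modulo squares and a fortiori modulo $N_{\ell/\Q}(\ell^\times)$; choosing $c\in\Q_{>0}$ (which preserves the signature) one can match the discriminants of any two forms with the prescribed rank and signature, so they are all similar and there is a single nonuniform commensurability class. If $n$ is odd then $n+1$ is even, so $c^{\,n+1}$ is a square, the discriminant is a similarity invariant, and scaling by $-1$ changes nothing beyond interchanging the isomorphic real groups $SU(n,1)$ and $SU(1,n)$; the similarity classes are thus parametrized by the discriminant, yielding the bijection with $\Q^\times/N_{\ell/\Q}(\ell^\times)$ asserted above.

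I expect the real content to lie in the first two steps: making precise that commensurability up to conjugacy is governed exactly by the $\Q$-isomorphism type (so that no nonuniform lattice is missed and inequivalent $\Q$-forms genuinely give incommensurable lattices), and invoking the structure theory of semisimple $\Q$-groups — Tits' classification together with the fact that $\Q$-anisotropy forces a division algebra — to see that the $\Q$-isotropic forms are precisely the $\mathrm{SU}(h)$ for honest hermitian $h$. Once those inputs are in place, the remainder is the classical classification of hermitian forms over $\ell$ plus the elementary computation of $c^{\,n+1}$ modulo $N_{\ell/\Q}(\ell^\times)$ carried out above.
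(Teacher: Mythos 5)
The paper itself offers no proof of this corollary---it is quoted from \cite[\S 2]{emery}---so your proposal has to stand on its own; your overall route (commensurability classes $\leftrightarrow$ $\Q$-isomorphism classes of the underlying $\Q$-forms $\leftrightarrow$ similarity classes of hermitian forms, then Landherr's theorem plus the computation of $c^{\,n+1}$ modulo norms) is indeed the argument behind that citation. The genuine gap is the step ``$\Q$-isotropy forces $B=M_{n+1}(\ell)$''. Your dichotomy ``$B$ split or $B$ a division algebra'' is incomplete: one can have $B\cong M_m(D)$ with $D$ a central division algebra over $\ell$ of degree $d\geq 2$ carrying an involution of the second kind and $m=(n+1)/d\geq 2$, and then $\mathrm{SU}(B,\sigma)$ is the special unitary group of a rank-$m$ hermitian form over $D$, which certainly can be $\Q$-isotropic (for instance if it contains a hyperbolic $D$-plane). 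These are exactly the lattices of ``second type'', and excluding them is the one substantive input beyond the classification of hermitian forms over $\ell$: if such a form were isotropic it would split off a hyperbolic $D$-plane, which at the real place (where $D\otimes_\Q\R\cong M_d(\C)$, via Morita equivalence) contributes signature $(d,d)$, so the real group would be $SU(p,q)$ with $\min(p,q)\geq d\geq 2$, contradicting signature $(n,1)$. Without an argument of this kind the reduction to honest hermitian forms on $\ell^{n+1}$ is unjustified, since ``not split'' does not mean ``division algebra''.

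Two smaller points. First, for $n$ odd you assert that the discriminant yields a bijection with $\Q^\times/N_{\ell/\Q}(\ell^\times)$, but you never check which classes actually occur: since $N_{\ell/\Q}(\ell^\times)\subseteq\Q_{>0}$, the sign of the discriminant is an invariant of the class, and a form of signature $(n,1)$ has discriminant of sign $(-1)^q=-1$ at the real place. So the discriminant map is injective on similarity classes, but its image consists precisely of the classes represented by negative rationals, a coset of the index-two subgroup of positive classes, naturally in bijection with $\Q_{>0}/N_{\ell/\Q}(\ell^\times)$ rather than with the whole group; a proof of a bijection must address this surjectivity issue (the caveat applies equally to the statement as printed, but it should at least be made explicit). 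Second, ``the archimedean place forces this field to be imaginary quadratic, hence our fixed $\ell$'' is a non sequitur: the archimedean condition forces \emph{some} imaginary quadratic field, and that it is $\ell$ is part of the setup (different fields give incommensurable lattices); likewise, concluding injectivity from Landherr tacitly uses that for $n+1\geq 3$ a $\Q$-isomorphism of the special unitary groups forces similarity of the underlying forms, which deserves a word.
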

Let $(K_p)_p$ be a coherent collection of parahoric subgroups of $\mathrm{G}(\Q_p)$. As in \cite[§3.4]{prasad} or \cite[§2.2 ]{emery}, an arithmetic subgroup $\Gamma$ of $G$ can be constructed: 
By strong approximation, 
$$
G \cdot \prod_{p\ prime} K_p \cdot \mathrm{G}(\Q)=\mathrm{G}(\mathbb{A}).
$$
\begin{de} \label{defgamma}
    We say that an arithmetic subgroup $\Gamma$ of $G$ is a \textit{principal arithmetic lattice} if it is the image of 
    $$
    \mathrm{G}(\Q)\cap \left(G\cdot \prod_{p\ prime} K_p \right)
    $$
    under the natural projection
    $$
     G \cdot \prod_{p\ prime} K_p \rightarrow G,
    $$
    where $\mathrm{G}(\Q)$ is understood as embedded diagonally into $\mathrm{G}(\mathbb{A})$ and $(K_p)_p$ is a coherent collection of parahoric subgroups of $\mathrm{G}(\Q_p)$.
\end{de}
We denote by $T_\ell$ the set of finite places $p$ where the the local group $\mathrm{G}(\Q_p)$ is not quasi-split.
Let $\Gamma$ be a principal arithmetic lattice of $G$ and let $(K_p)_p$
be the coherent collection of parahoric subgroups $K_p$ of $\mathrm{G}(\Q_p)$ corresponding to $\Gamma$.
We denote the quasi split unitary group, such that $\mathrm{G}$ is an inner form of it by $\mathscr{G}$. We noted above, that $\mathscr{G}=\mathrm{SU}(\frac{n+1}{2},\frac{n+1}{2})$ if $n$ is odd and $\mathscr{G}=\mathrm{SU}(\frac{n}{2}+1,\frac{n}{2})$ if $n$ is even. As described in \cite[2.2]{prasad}, we let $\mathrm{G}_p$ be the smooth affine $\Z_p$-group scheme associated with the parahoric subgroup $K_p$ of $\mathrm{G}(\Q_p).$ Analogously, we let $\mathscr{G}_p$ be the canonical smooth affine $\Z_p$-group scheme of $\mathscr{G}(\Q_p)$ associated by Bruhat-Tits theory \cite[Section 4]{gr} and let $\mathscr{K}_p$ be the corresponding parahoric subgroup.
Furthermore, we denote by $\overline{\mathrm{G}}_p$ and $\overline{\mathscr{G}}_p$ the the reduction modulo $p$ of $\mathrm{G}_p$ and $\mathscr{G}_p$ over the residue field $\mathbb{F}_p$. Both are connected and admit a Levi decomposition
$$
\overline{\mathrm{G}_p} = \overline{\mathrm{M}_p} \cdot \mathrm{R}_u(\overline{\mathrm{G}_p})
$$
and 
$$
\overline{\mathscr{G}_p} = \overline{\mathscr{M}_p} \cdot \mathrm{R}_u(\overline{\mathscr{G}_p}),
$$
where $\mathrm{R}_u$ denotes the unipotent radical. Computing the corresponding integral models, we can determine the groups $\overline{M}_p$ and $\overline{\mathscr{M}}_p$. This is necessary for Prasad's Volume formula. 
Namely, we apply \cite[Theorem 3.7]{prasad} and \cite[Proposition 3.6]{emery} (note that we omit the Tamagawa number in the formula, since it is $1$ in our case anyway):
\begin{align} \label{pra}
    \mu_{EP}(\Gamma\backslash G)=\chi(\C \mathbb{P}^n) D_\ell^{s(n)} \left( \prod_{r=1}^n \frac{r!}{(2\pi)^{r+1}}\right) \cdot \prod_p \frac{p^{\frac{1}{2}(\dim(\overline{M}_p) + \dim(\overline{\mathscr{M}}_p))}}{|\overline{M}_p(\mathbb{F}_{p})|},
\end{align}
where 
\begin{align*}
    s(n)\coloneqq \left \{ 
    \begin{matrix}
        \frac{(n-1)(n+2)}{4} & \text{ if $n$ is odd}\\
        \frac{n(n+3)}{4} & \text{ if $n$ is even}
    \end{matrix}
    \right.
\end{align*}
and $\chi(\C \mathbb{P}^n)$ is the Euler-Characteristic of the compact dual $U/K=\C \mathbb{P}^n$ of $G$.
Since $\overline{\mathscr{M}_p} \cong \overline{{M}_p} $ for $p \not \in T_\ell$, we can rewrite the product in (\ref{pra}) as in \cite[Section 3.1]{emery} with:
\begin{align}
    \prod_p \frac{p^{\frac{1}{2}(\dim(\overline{M}_p) + \dim(\overline{\mathscr{M}}_p))}}{|\overline{M}_p(\mathbb{F}_{p})|} = 
    \prod_p \frac{p^{\dim(\overline{\mathscr{M}}_p)}}{|\overline{\mathscr{M}}_p(\mathbb{F}_{p})|} \cdot \prod_{p\in T_\ell} \lambda_p,
\end{align}
with the rational factors
\begin{align} \label{lambda}
    \lambda_p \ceq p^{\frac{1}{2}(\dim(\overline{M}_p) - \dim(\overline{\mathscr{M}}_p))} \frac{|\overline{\mathscr{M}}_p(\mathbb{F}_{p})|}{|\overline{M}_p(\mathbb{F}_{p})|}.
\end{align}
We have 
$$
\overline{\mathscr{M}}_p= \left \{
\begin{matrix}
\mathrm{SL}_{n+1}  & \text{ $p$ split}\\
\su_{n+1}  & \text{ $p$ inert}\\
\mathrm{Sp}_{n+1}   & \text{ $p$ ramified}
\end{matrix}\right.
$$
if $n$ is odd and 
$$
\overline{\mathscr{M}}_p= \left \{
\begin{matrix}
\mathrm{SL}_{n+1}   & \text{ $p$ split}\\
\su_{n+1}   & \text{ $p$ inert}\\
\mathrm{SO}_{n+1}   & \text{ $p$ ramified}
\end{matrix}\right.
$$
if $n$ is even.
Thus (\cite[Remark 1.2.3]{finiteclassical} and \cite[2.6]{finiteclassical2}):
$$
\frac{p^{\dim(\overline{\mathscr{M}}_p)}}{|\overline{\mathscr{M}}_p(\mathbb{F}_{p})|} =  \left \{
\begin{matrix}
\prod_{i=2}^{n+1} \frac{1}{1-\frac{1}{p^i}} & \text{ $p$ split}\\
\prod_{i=2}^{n+1} \frac{1}{1-\frac{(-1)^i}{p^i}} & \text{ $p$ inert} \\
\prod_{i=1}^{\frac{n+1}{2}} \frac{1}{1-\frac{1}{p^{2i}}}  & \text{ $p$ ramified}.
\end{matrix}\right.
$$
This shows that 
\begin{align} \label{zetaproductodd}
   \prod_{p} \frac{p^{\dim(\overline{\mathscr{M}}_p)}}{|\overline{\mathscr{M}}_p(\mathbb{F}_{p})|}  =
   \zeta(2)L_\ell(3)\cdot ...\cdot \zeta(n+1),
\end{align}
if $n$ is odd and
\begin{align} \label{zetaproducteven}
   \prod_{p} \frac{p^{\dim(\overline{\mathscr{M}}_p)}}{|\overline{\mathscr{M}}_p(\mathbb{F}_{p})|}  =
   \zeta(2)L_\ell(3)\cdot ...\cdot L_\ell(n+1),
\end{align}
if $n$ is even.

To determine $\vol(\Gamma \backslash G)$ with the measure chosen above, we proceed as in \cite[proof of Proposition 4.1]{Harald} and use the fact that
\begin{align}
\mathrm{vol}_{\mu} &= \mathrm{vol}_{g_0}(\C \mathbb{P}^n) \frac{\mathrm{vol}_{EP}}{\chi(\C \mathbb{P}^n)} \\
 &= \mathrm{vol}_{g_0}(\C \mathbb{P}^n) \cdot |D_\ell|^{s(n)} \left( \prod_{r=1}^n \frac{r!}{(2\pi)^{r+1}}\right) \cdot \prod_{p} \frac{p^{\dim(\overline{\mathscr{M}}_p)}}{|\overline{\mathscr{M}}_p(\mathbb{F}_{p})|} \cdot \prod_{p \in T_\ell} \lambda_p,
\end{align}
where $g_0$ denotes the canonical Riemannian metric on $\C \mathbb{P}^n$. By \cite[Equation 3.7]{vol}, we have

$$
\mathrm{vol}_{g_0}(\C \mathbb{P}^n)=\frac{\pi^n}{n!}.
$$ 
We combine these results to
\begin{thm} \label{volume-final}
$$
\vol (\Gamma \backslash G)= \pi^n |D_\ell|^{\frac{(n-1)(n+2)}{4}} \left( \prod_{i=1}^n \frac{(i-1)!}{(2\pi)^{i+1}}\right) \cdot \prod_{k=2}^{n+1} Z_{\ell}(k)  \cdot \prod_{p \in T_\ell}\lambda_p
$$
for $n$ odd and
$$
\vol (\Gamma \backslash G)= \pi^n |D_\ell|^{\frac{n(n+3)}{4}} \left( \prod_{i=1}^n \frac{(i-1)!}{(2\pi)^{i+1}}\right) \cdot \prod_{k=2}^{n+1} Z_{\ell}(k)
$$
for $n$ even.
\end{thm}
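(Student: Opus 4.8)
The plan is to obtain both formulas by combining the ingredients assembled in this section: Prasad's formula \eqref{pra} for the Euler--Poincaré volume of $\Gamma\backslash G$, the factorisation of its non-archimedean part into the contribution of the quasi-split inner form times the rational correction factors $\lambda_p$ of \eqref{lambda}, the evaluations \eqref{zetaproductodd} and \eqref{zetaproducteven} of the resulting Euler product, and the comparison between the Euler--Poincaré measure and the Haar measure $\mu$ fixed in Section \ref{prelims}.

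Concretely, I would proceed in four short steps. First, use the measure comparison of \cite[proof of Proposition 4.1]{Harald}, namely $\vol_{\mu}(\Gamma\backslash G) = \frac{\vol_{g_0}(\C\mathbb{P}^n)}{\chi(\C\mathbb{P}^n)}\,\mu_{EP}(\Gamma\backslash G)$, where $\C\mathbb{P}^n = U/K$ is the compact dual of $G/K$ carrying its canonical metric $g_0$; plugging \eqref{pra} into this, the Euler characteristic $\chi(\C\mathbb{P}^n)$ cancels, $D_\ell^{s(n)}$ is replaced by $|D_\ell|^{s(n)}$ since the left-hand side is positive, and the local product is rewritten as $\prod_p p^{\dim(\overline{\mathscr{M}}_p)}/|\overline{\mathscr{M}}_p(\mathbb{F}_p)| \cdot \prod_{p\in T_\ell}\lambda_p$ exactly as above --- this is precisely the displayed chain preceding the theorem. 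Second, substitute $\vol_{g_0}(\C\mathbb{P}^n) = \pi^n/n!$ from \cite[Equation 3.7]{vol} and simplify the archimedean part using the telescoping identity $\frac{1}{n!}\prod_{r=1}^{n}\frac{r!}{(2\pi)^{r+1}} = \prod_{i=1}^{n}\frac{(i-1)!}{(2\pi)^{i+1}}$, valid because $0!=1!=1$. Third, replace $\prod_p p^{\dim(\overline{\mathscr{M}}_p)}/|\overline{\mathscr{M}}_p(\mathbb{F}_p)|$ by $\prod_{k=2}^{n+1}Z_\ell(k)$ via \eqref{zetaproductodd} if $n$ is odd and \eqref{zetaproducteven} if $n$ is even. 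Fourth, note that $s(n)$ equals $\frac{(n-1)(n+2)}{4}$ for $n$ odd and $\frac{n(n+3)}{4}$ for $n$ even, matching the exponents of $|D_\ell|$ in the statement, and recall that when $n$ is even the group $\mathrm{G}(\Q_p)$ is a $p$-adic unitary group of odd rank $n+1$, hence quasi-split at every prime, so $T_\ell=\emptyset$ and the factor $\prod_{p\in T_\ell}\lambda_p$ simply does not occur --- which is why the even formula has one term fewer than the odd one.

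I do not expect a serious obstacle: the arithmetically substantial input --- Prasad's formula, the Bruhat--Tits integral models, the identification of the reductive quotients $\overline{M}_p$ and $\overline{\mathscr{M}}_p$, the Euler products \eqref{zetaproductodd}--\eqref{zetaproducteven}, and the factors $\lambda_p$ of \eqref{lambda} --- has already been dealt with. What remains needs only care in two mundane respects: getting the normalising constant $\vol_{g_0}(\C\mathbb{P}^n)/\chi(\C\mathbb{P}^n)$ that links the Euler--Poincaré and Haar normalisations exactly right, which is where I would rely on \cite[Proposition 4.1]{Harald}, and the bookkeeping of factorials and powers of $2\pi$. Granting the measure comparison, the theorem follows by the substitutions just described.
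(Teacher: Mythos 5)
Your proposal is correct and follows essentially the same route as the paper: the displayed chain before the theorem is exactly your measure comparison $\vol_\mu = \vol_{g_0}(\C\mathbb{P}^n)\,\mu_{EP}/\chi(\C\mathbb{P}^n)$ combined with \eqref{pra}, the rewriting of the local product via \eqref{lambda}, the evaluations \eqref{zetaproductodd}--\eqref{zetaproducteven}, and $\vol_{g_0}(\C\mathbb{P}^n)=\pi^n/n!$, with the same telescoping of factorials; your observation that $T_\ell=\emptyset$ for $n$ even (so the $\lambda_p$ factor disappears) is also the paper's justification, stated in the following subsection.
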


\subsection{Parahoric Subgroups of Maximal Volume}
As an example, we consider the case where $(K_p)_p$ is a coherent collection of parahoric subgroups of maximal volume in $\mathrm{G}(\Q_p)$.
This ensures that $K_p$ is special for all $p$ and if $p\not \in T_\ell$ is not ramified in $\ell$, then $K_p$ is hyperspecial (\cite[Section 3.1]{emery}). The covolume of the principal arithmetic lattice $\Gamma$ is now the same for parahoric subgroups of maximal volume, i.e. independent of the explicit choice of the individual $K_p$. To determine the factors $\lambda_p$, we study special unitary groups over local fields. We will then define the parahoric subgroups, the integral models and compute the reductive quotient over the special fibres in relevant case. 
Let $L/\Q_p$ be an imaginary quadratic field extension. Now for $q\in \N$ we set $V\coloneqq L^q$ and we look at hermitian forms $h$ on $V$ given by (with a chosen basis)
$$
h(\mathbf{x},\mathbf{y})\coloneqq \mathbf{y}^\dagger S_h \mathbf{x}
$$
where $\overline{\cdot}$ denotes the non-trivial element in the Galois group of $L/\Q$ and $S_h$ is a hermitian matrix with respect to $\overline{\cdot}$. We define the following (\cite[Section 3]{hanke}):
\begin{enumerate}
    \item $H\coloneqq L^2$ is the split rank-two hermitian space over $L$ with hermitian form $h$ given by 
    $$
    S_h=\left(\begin{matrix}
0 & 1  \\
1 & 0  \\
\end{matrix}\right).
    $$
    \item Let $A_{L}$ be the ring of integers of $L$. 
    \item We denote by $\mathscr{D}$ the different of $L/\Q_p$.
    \item Let $\Delta\coloneqq A_{L} \oplus \mathscr{D}^{-1} $ be the maximal $A_{L}$-lattice in $H$.
    \item Let $N_{L / \Q_p}$ be the norm of the extension $L$.
\end{enumerate}

Now \cite{hanke} describes the two isomorphism classes of such hermitian spaces over local fields. These are classified by their discriminant, which is an element of $\Q_p^\times /N_{L / \Q_p}(L^\times)$. For each class of $\Q_p^\times /N_{L / \Q_p}(L^\times)$, we pick a representative $\alpha \in \Z_p$ of minimal valuation, thus $0\leq v_p(\alpha) \leq 1$: 
\begin{enumerate}
    \item In the odd case, $q=2m+1$, the two isomorphism classes look as follows: \\
For $\alpha \in \Q_p^\times$ let $V_\alpha\coloneqq H^m \oplus E_{\alpha}$, where $E_{\alpha}$ is the one dimensional hermitian space with hermitian form $h(x,y)=\alpha x \overline{y}$.\\
Now $\mathrm{disc}(h)=(-1)^m \alpha$. 
We define the corresponding maximal lattice 
$$
\Lambda_{\alpha}\coloneqq \Delta^m \oplus A_{L} 
$$

For both classes, the corresponding unitary groups are isomorphic, since we can switch between the classes by multiplying $S_h$ with an element of $\Q_p^\times$, leaving the unitary group invariant. Moreover, both groups are quasi-split.  
\item In the even case, $q=2m$, the two isomorphism classes look as follows: \\
For $\alpha \in N_{L / \Q_p}(L^\times)$ let $V_\alpha\coloneqq H^m$ and the maximal lattice
$$
\Lambda_{\alpha}=\Delta^m.
$$

Now $\mathrm{disc}(h)=(-1)^m= (-1)^m \alpha \in \Q_p^\times /N_{L / \Q_p}(L^\times)$. \\
For $\alpha \not\in N_{L / \Q_p}(L^\times)$ let $V_\alpha\coloneqq H^{m-1} \oplus D$, where $D$ is the division algebra $L \oplus L\cdot z$ over $\Q_p$ with the multiplication
\begin{align} 
    z^2 &= \alpha,    \nonumber \\ 
    x \cdot z &= z \cdot \overline{x}.
\end{align}
Let $A_D$ be its ring of integers and take the hermitian form 
\begin{align} \label{hermitian}
    h(x_1+x_2 \cdot z, y_1 + y_2 \cdot z)=x_1 \overline{y_1} - \alpha x_2 \overline{y_2}.
\end{align}

In this case, $\Lambda_{\alpha}=\Delta^{m-1} \oplus A_D$ and 
$\mathrm{disc}(h)=(-1)^{m} \alpha$. 
Here, only the unitary group attached to $V_\alpha$ for $\alpha \in N_{L / \Q_p}(L^\times)$ is quasi-split. 

\end{enumerate}
With this classification of hermitian spaces over local fields, we make the following observations: If $n$ is even, every $\mathrm{G}(\Q_p)$ is quasi-split and hence $T_\ell=\emptyset$. For $n$ odd, we distinguish two cases: If $p$ is unramified, every unit is a norm, which implies that our global hermitian form over $\ell$ (with discriminant $-1$) descends to the local hermitian space corresponding to the identity in $N_{L / \Q_p}(L^\times)$, which is quasi-split (hence $p\not\in T_\ell$). If $p$ is ramified, denote by $\mathfrak{p}$ the unique prime ideal that lies above $p$. Then $p\in T_\ell$ if and only if $-1=\mathrm{disc}(h_{\mathrm{G}(\Q_p)})\neq \mathrm{disc}(h_{\mathscr{G}(\Q_p)})=(-1)^{\frac{n+1}{2}}$ in $\Q_p^\times / N_{\ell_{\mathfrak{p}} / \Q_p}(\ell_{\mathfrak{p}}^\times)$, so that is the places where $(-1)^{\frac{n-1}{2}} \not\in N_{\ell_{\mathfrak{p}} / \Q_p}(\ell_{\mathfrak{p}}^\times)$. For these primes $p$, we have (\cite[Proposition 3.9]{hanke}),  
                        \begin{align*}
                            \overline{M}(\mathbb{F}_p) &= \{\mathrm{diag}(X,Y)|X\in \mathrm{Sp}_{n-1}(\mathbb{F}_p), Y\in  \mathrm{O}^-_2(\mathbb{F}_p), \det (Y)=1 \} \\
                        &\cong \mathrm{Sp}_{2m-2}(\mathbb{F}_p) \times \mathrm{SO}^-_2(\mathbb{F}_p), 
                        \end{align*}
    if $p$ is odd, where $\mathrm{SO}^-_2$ is defined in \cite{finiteclassical2} 2.6. For $p=2$, the matrices in $\mathrm{O}^-_2(\mathbb{F}_p)$ already have determinant $1$ and so 
    \begin{align*}
                            \overline{M}(\mathbb{F}_2) 
                        &\cong \mathrm{Sp}_{2m-2}(\mathbb{F}_2) \times \mathrm{O}^-_2(\mathbb{F}_2).
                        \end{align*}
         Hence for $p\in T_\ell$ we have 
\begin{align*}
    \lambda_p &=p^{\frac{1}{2}(\dim(\mathrm{Sp}_{n-1}\times \mathrm{SO}^-_2 - \dim(\mathrm{Sp}_{2m}} \frac{|\mathrm{Sp}_{n+1}(\mathbb{F}_{p})|}{|\mathrm{Sp}_{n-1}\times \mathrm{SO}^-_2(\mathbb{F}_{p})|}= \\
    &= \frac{p^{n+1}-1}{p+1},
\end{align*}
due to \cite[Remark 1.2.3]{finiteclassical} and \cite[2.6]{finiteclassical2}.

\section{The Error Term} \label{error}

Let $\Gamma$ be a principal arithmetic lattice. Recall that we intend to look at normal subgroups $\Gamma(m)$ of $\Gamma$ of finite index. We have to define what we mean by the principal congruence subgroups $\Gamma(m)$. We defined the $\Q$-algebraic group $\mathrm{G}=\mathrm{SU}(n,1)$ as the unitary group of the hermitian form with signature $(n,1)$ over the imaginary field extension $\ell=\Q(\sqrt{-k})$.
\begin{align*}
    \mathrm{G}(\Q)= \left\{ X\in M_{n+1}(\ell)|\ X^\dagger S_{n,1} X= S_{n,1}, \det(x)=1 \right\}.
\end{align*}
By restriction of scalars of the field $\ell$, we may embed $\varphi: \mathrm{G}(\Q) \hookrightarrow \mathrm{GL}_{2(n+1)}(\Q)$. Set 
$$
\mathrm{G}_{\Z}\coloneqq\varphi^{-1} (\mathrm{GL}_{2(n+1)}(\Z)).
$$
This depends on the embedding $\varphi$, but the commensurability class of $\mathrm{G}_{\Z}$ does not. Let $m$ be coprime to $2k$ and define 
$$
\pi_m:\mathrm{GL}_{2(n+1)}(\Z) \twoheadrightarrow \mathrm{GL}_{2(n+1)}(\Z / m \Z )
$$
the reduction modulo $m$ map. 
We computed the co-volume of a principal arithmetic lattice $\Gamma$ in the real group $G$ and this $\Gamma$ is commensurable with $\mathrm{G}_{\Z}$ \cite{emery}.
\begin{de} \label{congruence-subgroup}
We denote with
$$
\Gamma(1)\coloneqq\Gamma \cap \mathrm{G}_{\Z},
$$
which has finite index in both $\Gamma$ and $G(\Z)$. 
Let
$$
\Gamma(m)\coloneqq\Gamma(1) \cap \ker ((\pi_m\circ \varphi)|_{\mathrm{G}_{\Z}}),
$$
for $m\geq 3$.
\end{de}
This implies that $\Gamma(m)$ is normal in $\Gamma(1)$. Furthermore, since $m\geq 3$, the group $\Gamma(m)$ is torsion free \cite[Lemma 1]{serre}. There are infinitely many $m$ such that $\Gamma(m)$ is net (\cite[III, Proposition 17.4]{borelgroups}).
Now we know that 
$$
\mathrm{vol}(\Gamma(m) \backslash G) = h_m\cdot  \mathrm{vol}(\Gamma \backslash G),
$$
where $h_m$ is the index of $\Gamma(m)$ in $\Gamma$.

\subsection{Notation}

In the last chapter, we got a closed formula for $m(\Gamma(m),\pi)$ for the case that $n$ is odd. For $n$ even, we have to evaluate the error term 

\begin{align} \label{errror}
  \kappa (-1)^{n(\tau)} \dim (E_{\tau-\delta_K}) \cdot \sum_{i=1}^s \frac{\mathrm{vol}(\Gamma(m) \cap M^i \backslash M^i)}{\mathrm{vol}(\Gamma(m)\cap M_2^i \backslash M_2^i)^{n}}
\end{align}
in \ref{multiplicity}, where $M^i=\mathrm{R}_u(Q^i)$ are the unipotent radicals of the cusps $Q^1,...,Q^s$ of $\Gamma(m)$.
The proof of Theorem 10 in \cite{degeorge} gives the following:
$$
\sum_{i=1}^s \frac{\mathrm{vol}(\Gamma(m) \cap M^i \backslash M^i)}{\mathrm{vol}(\Gamma(m)\cap M_2^i \backslash M_2^i)^{n}}=h_N \sum_{j=1}^r \frac{\mathrm{vol}(\Gamma(1) \cap N^j \backslash N^j)}{\mathrm{vol}(\Gamma(m) \cap N_2^j \backslash N_2^j)^{n}},
$$
where $N^j=\mathrm{R}_u(
P^j)$ for the cusps $P^1,...,P^r$ of $\Gamma(1)$. So we have to determine $\mathrm{vol}(\Gamma(1) \cap N^j \backslash N^j)$ and $\mathrm{vol}(\Gamma(m) \cap N_2^j \backslash N_2^j)$ or at least give an upper bound for the quotient. 
 We introduce the notation
 $$
 A(a,b)\coloneqq\left(\begin{matrix}
a & 0 & ... & b \\
0 & 1 & ... & 0 \\
... & & & \\
b & 0 & ... & a
 \end{matrix}\right),
 $$
 so $A=\{A(a,b)|\ a,b\in\R\}$. \\
 Now 
 $$
 N_2=\mathrm{exp}(\mathfrak{g}_2)=\{u_2(\mu)|\ \mu\in\R\}, 
 $$
 where 
 $$
 u_2(\mu)\coloneqq\left(\begin{matrix}
1-i\mu & 0 & ... & i\mu \\
0 & 1 & ... & 0 \\
... & & & \\
-i\mu & 0 & ... & 1+i\mu
 \end{matrix}\right)
 $$
 and 
 $$
 N=\mathrm{exp}(\mathfrak{g}_1 \oplus \mathfrak{g}_2)=\{u(\mathbf{x},\mu)|\ \mu\in\R,\ \mathbf{x}=(x_1,...,x_{n-1})\in \C^{n-1} \}, 
 $$
 where 
 $$
u(\mathbf{x},\mu)\coloneqq u_2(\mu) + \left(\begin{matrix}
-\frac{|\mathbf{x}|^2}{2} & x_1 & x_2 & ... & \frac{|\mathbf{x}|^2}{2} \\
-\overline{x_1} & 0 & 0 & ... & \overline{x_1} \\
...& & & & \\
-\frac{|\mathbf{x}|^2}{2} & x_1 & x_2 & ... & \frac{|\mathbf{x}|^2}{2}
 \end{matrix}\right).
 $$
 Above, we have set
 $$
 |\mathbf{x}|^2\coloneqq\langle \mathbf{x},\mathbf{x}\rangle,
 $$
 where
 $$
 \langle \mathbf{x}, \mathbf{y} \rangle \coloneqq \sum_{i=1}^{n-1} x_i\overline{y_i}. 
 $$
 Note that 
 $$
 u_2(\mu_1)\cdot u_2(\mu_2)=u_2(\mu_1+\mu_2)
 $$
 and 
 $$
 u(\mathbf{x}^1,\mu_1)\cdot  u(\mathbf{x}^2,\mu_2) = 
 u(\mathbf{x}^1+\mathbf{x}^2, \mu_1+\mu_2+\mathrm{Im}(\langle \mathbf{x}^1,\mathbf{x}^2\rangle )).
 $$
\\
In Section \ref{prelims}, we defined $M$ as the centralizer of $A$ in $K$. This means, that 
\begin{align*}
    M &=\{X\in K|\ A(a,b)X=XA(a,b)\ \forall a,b\in \R\}\\
    &= \{\mathrm{diag}(e^{i\theta},Y,e^{i\theta})|\ Y^\dagger Y= 1,\ \det(Y)=e^{-2i\theta}  \}.
\end{align*}
We have $P_0=MAN$. Recall that we sum the Error terms 
$$
\frac{\mathrm{vol}(\Gamma(1) \cap N^j \backslash N^j)}{\mathrm{vol}(\Gamma(m) \cap N_2^j \backslash N_2^j)^{n}},
$$
where $j$ runs through an indexation $P^1,...,P^r$ of the cusps of $G$ (that is, $\Gamma(1)$-conjugacy classes of $\Gamma(1)$-cuspidal parabolic subgroups of $G$). For an arbitrary $P^i\in P_\Gamma$, we have that 
\begin{align*}
    P^i&=M^iA^iN^i \\
    N^i&=k_i^{-1} N k_i \\
    N_2^i &= k_i^{-1} N_2 k_i, 
\end{align*}
where $k_i\in K$. 
\subsection{Estimating the Error Term} \label{errorterm}
Assume we work with $n$ even, the only case where the error terms appear. Here, we know that there is only one commensurability class of nonuniform arithmetic lattices.
We want to estimate the size of $\mathrm{vol}(\Gamma(m) \cap N_2^j \backslash N_2^j)$.
    We know that $N_2^j=(k^j)^\dagger N_2 k^j$, for some $k^j\in K$. Now let 
    $$
    k^j= \left(\begin{matrix}
        k_{11} & k_{12} & \ldots & 0 \\
        k_{21} & k_{22} & \ddots & 0 \\
        \vdots & \vdots & \ddots & 0 \\
        0 & 0& \ldots & k_{n+1,n+1}
    \end{matrix}\right).
    $$
    Then an element of $\Gamma(m) \cap N_2^j$ is of the form $u_2^j(\mu)\coloneqq(k^j)^\dagger u_2(\mu)k^j$ and its $(n+1,n+1)$-entry is still $1 + i \mu$. But this implies that $\mu = \sqrt{k} zm$ for $z \in \Z$. We thus see that 
    $\Gamma(m) \cap N_2^j \subseteq \{u_2^j(\sqrt{k} mz)| z\in \Z\}\subseteq N_2^j$ and so
\begin{align} \label{volume-N2}
    \mathrm{vol}(\Gamma(m) \cap N_2^j \backslash N_2^j) &\geq \mathrm{vol}(\{u_2^j(\sqrt{k} mz)| z\in \Z\} \backslash N_2^j)\nonumber\\
    &=\mathrm{vol}(\{u_2(\sqrt{k} mz)| z\in \Z\} \backslash N_2) 
    = \sqrt{\frac{k}{8(n+1)}} m,
\end{align}
with the normalization of the Haar measure chosen in Section \ref{prelims}.

We plug this into the error term (\ref{errror}) and get:

\begin{align*}
 & \left| \kappa (-1)^{n(\tau)} \dim (E_{\tau-\delta_K}) \cdot \sum_{i=1}^s \frac{\mathrm{vol}(\Gamma(m) \cap M^i \backslash M^i)}{\mathrm{vol}(\Gamma(m)\cap M_2^i \backslash M_2^i)^{n}} \right|\\
\leq & |\kappa| \dim (E_{\tau-\delta_K}) \cdot h_m \sum_{j=1}^r \frac{\mathrm{vol}(\Gamma(1) \cap N^j \backslash N^j)}{\mathrm{vol}(\Gamma(m)\cap N_2^j \backslash N_2^j)^{n}} \\
\leq  & |\kappa|  \dim (E_{\tau-\delta_K}) \cdot \frac{h_m}{\left(\sqrt{\frac{k}{8(n+1)}} m \right)^n}\sum_{j=1}^r \mathrm{vol}(\Gamma(1) \cap N^j \backslash N^j) \\
\eqqcolon  & C \frac{h_m}{m^n}.
\end{align*}
When we note that 
$$
\mathrm{vol}(\Gamma(m) \backslash G) = h_{m}\cdot \mathrm{vol}(\Gamma \backslash G),
$$
we obtain our main result:

\begin{thm} \label{mult}
Let $\Gamma$ be a principal arithmetic lattice of $G=SU(n,1)$ and let $\Gamma(m)$ be a net principal congruence subgroup (as in Definition \ref{congruence-subgroup}) of index $h_m$ in $\Gamma$, with $n\geq 3$ and $m\geq 3$. Let $\pi_\tau \in \hat{G}$ be of integrable discrete series with Harish-Chandra parameter $\tau=(\tau_1,.\ldots,\tau_n)_{\{\varepsilon_i\}}$ with $\tau_{n+1}\ceq - \sum_{i=1}^n \tau_i$, where
\begin{align*}
        \tau_1 &> \tau_2 > \ldots >\tau_n > \tau_{n+1}+n.
        \end{align*}
Then we have
\begin{align*}
    & m(\Gamma(m),\pi_\tau) =d_\tau \mathrm{vol}(\Gamma  \backslash G)h_m \\
    &= \frac{h_m}{4^n} \prod_{i=1}^n  \frac{\left|\prod_{j=i+1}^{n+1} (\tau_i-\tau_j)\right|}{(2\pi)^{i+1}} 
    \cdot |D_\ell|^{\frac{(n-1)(n+2)}{4}}  \cdot \prod_{k=2}^{n+1} Z_{\ell}(k)  
    \cdot \prod_{p \in T_\ell}\lambda_p 
\end{align*}
if $n$ is odd and 
\begin{align*}
& m(\Gamma(m),\pi_\tau) =d_\tau \mathrm{vol}(\Gamma  \backslash G)h_m + O\left( \frac{h_m}{m^n}\right) \\
&= h_m \left( \frac{1}{4^n} \prod_{i=1}^n  \frac{\left|\prod_{j=i+1}^{n+1} (\tau_i-\tau_j)\right|}{(2\pi)^{i+1}} \cdot |D_\ell|^{\frac{n(n+3)}{4}}  \cdot \prod_{k=2}^{n+1} Z_{\ell}(k)  + O\left( \frac{1}{m^n}\right)\right)
\end{align*}
if $n$ is even.
The finite set $T_\ell$ consists of those primes $p$ where $\mathrm{G}(\Q_p)$ is not quasi-split. The rational coefficients $\lambda_p$ are defined in \eqref{lambda}.
\end{thm}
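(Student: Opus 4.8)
The proof is essentially an assembly of the results prepared in the preceding sections; no genuinely new argument is required. First I would apply De George's Theorem \ref{multiplicity} to the group $\Gamma(m)$, observing that a net principal congruence subgroup is in particular a net arithmetic subgroup of $G$, and that the hypothesis $\tau_1 > \tau_2 > \cdots > \tau_n > \tau_{n+1}+n$ is precisely the integrability condition extracted in \eqref{integr}--\eqref{lambdacond}, so that $\pi_\tau$ is an integrable discrete series representation with $\tau \in L_T'$ and Theorem \ref{multiplicity} applies. This yields $m(\Gamma(m),\pi_\tau) = d_\tau\,\mathrm{vol}(\Gamma(m)\backslash G)$ for $n$ odd, and the same expression augmented by the error term \eqref{errror} for $n$ even. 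I would then substitute $\mathrm{vol}(\Gamma(m)\backslash G) = h_m\,\mathrm{vol}(\Gamma\backslash G)$.

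Next I would insert the formal degree from \eqref{formaldegree} and the covolume from Theorem \ref{volume-final} and simplify. Multiplying
\[
d_\tau = \frac{1}{(4\pi)^n}\prod_{i=1}^n \frac{\left|\prod_{j=i+1}^{n+1}(\tau_i-\tau_j)\right|}{(i-1)!}
\]
by $h_m\,\mathrm{vol}(\Gamma\backslash G)$, the factors $(i-1)!$ cancel against those appearing in the volume formula, the numerical constant collapses as $\pi^n/(4\pi)^n = 4^{-n}$, and the factors $|D_\ell|^{s(n)}$, $\prod_{k=2}^{n+1} Z_{\ell}(k)$ and, for $n$ odd, $\prod_{p\in T_\ell}\lambda_p$ carry over unchanged, producing the displayed formulas. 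For $n$ even I would moreover recall from the local analysis of Section \ref{prasad} that every $\mathrm{G}(\Q_p)$ is quasi-split, so $T_\ell=\emptyset$ and $\prod_{p\in T_\ell}\lambda_p$ is the empty product $1$, consistently with the $n$-even case of Theorem \ref{volume-final}.

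The remaining point, and the only one carrying real content, is the error term for $n$ even. Here I would invoke the estimate of Section \ref{errorterm}: reducing the sum over the cusps of $\Gamma(m)$ to the cusps of $\Gamma(1)$ as in the proof of \cite[Theorem 10]{degeorge}, and inserting the lower bound \eqref{volume-N2} for $\mathrm{vol}(\Gamma(m)\cap N_2^j\backslash N_2^j)$, one obtains
\[
\left|\kappa(-1)^{n(\tau)}\dim(E_{\tau-\delta_K})\sum_{i=1}^s \frac{\mathrm{vol}(\Gamma(m)\cap M^i\backslash M^i)}{\mathrm{vol}(\Gamma(m)\cap M_2^i\backslash M_2^i)^n}\right| \leq C\,\frac{h_m}{m^n},
\]
with $C$ depending only on $n$, $k$ and $\tau$. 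The main obstacle — already settled in Section \ref{errorterm} — is precisely securing this uniformity in $m$: one must bound the number of cusps of $\Gamma(1)$ and the volumes $\mathrm{vol}(\Gamma(1)\cap N^j\backslash N^j)$ independently of $m$, while the $m^n$ growth comes from the denominators. Once this is in place the error term is $O(h_m/m^n)$, and combining the displays above gives the theorem.
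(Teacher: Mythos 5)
Your proposal is correct and follows essentially the same route as the paper: Theorem \ref{multiplicity} applied to the net subgroup $\Gamma(m)$, the substitution $\mathrm{vol}(\Gamma(m)\backslash G)=h_m\,\mathrm{vol}(\Gamma\backslash G)$, insertion of the formal degree \eqref{formaldegree} and the covolume of Theorem \ref{volume-final} with the cancellation $\pi^n/(4\pi)^n=4^{-n}$ and of the factors $(i-1)!$, and for $n$ even the cusp reduction to $\Gamma(1)$ plus the lower bound \eqref{volume-N2} giving the $O(h_m/m^n)$ error with a constant independent of $m$. Nothing essential is missing.
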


\section{Application I: Cuspidal Cohomology} \label{cuspcoh}

As before, we let $\tau=\rho + \delta_G$. In Theorem 5.3 of \cite{zuck}, the $\mathfrak{g}$-module $A_{\mathfrak{q}}(\rho)$ is identified as the discrete series representation of Harish-Chandra parameter $\tau$, if we let $\mathfrak{q}$ be the standard parabolic sub-algebra of $\mathfrak{g}$ corresponding to the parabolic subgroup $P$ of $G$, given by the upper triangular matrices in $G$. We require 
    \begin{align*}
        (\rho,\alpha)\geq 0\ \forall \alpha \in \Delta_G^+,
    \end{align*}
    which is equivalent to 
    \begin{align} \label{cohom-condition}
        \tau_i +i &\geq \tau_j + j \quad \ &\text{for   $1\leq i <j\leq n$}, \nonumber \\
        \tau_i  &\geq  \tau_{n+1} + \frac{n(n+1)}{2} + n+1 -i \quad \ &\text{for $1\leq i \leq n$},
    \end{align}
    where again we have set $\tau_{n+1} = -\sum_{i=1}^n \tau_i $.
    According to Theorem 5.3 and Theorem 5.5 in \cite{zuck} we have that $\pi_\tau$ is cohomological with respect to $F_{\rho}$,
    the finite dimensional irreducible representation of $\mathfrak{g}$ of lowest weight $-\rho$. \\ 
    More precisely, we have that (note that $\dim \mathfrak{u} \cap \mathfrak{p}=n$)
    \begin{align*}
        H^i(\mathfrak{g},\mathfrak{k},\pi_\tau \otimes F_{\rho}) \cong H^{i-n}(\mathfrak{t},\mathfrak{t}\cap \mathfrak{k},\C) \cong \mathrm{Hom}_{\mathfrak{t}}(\Lambda^{i-n}(0),\C),
    \end{align*}
    which means that 
    \begin{align} \label{cohomology}
        H^i(\mathfrak{g},\mathfrak{k},\pi_\tau \otimes F_{\rho}) \cong \left\{ 
        \begin{matrix}
            \C & \text{if $i=n$}\\
            0 & \text{else}.
        \end{matrix}
        \right.
    \end{align}
    Also, for $F \not \cong F_{\rho}$, 
    \begin{align*}
        H^i(\mathfrak{g},\mathfrak{k},\pi_\tau \otimes F)= 0. 
    \end{align*}
    
 We have defined the arithmetic subgroup $\Gamma(1)$ of $G$ above. Consider a tower of arithmetic subgroups $\{\Gamma(m_i)\}_{i=1}^\infty$ of $G$, where $m_1=1$ and $\Gamma(m_i)$ is a normal congruence subgroup of finite index in $\Gamma(1)$, as in the previous section. We also assume 
$$
\bigcap_{i=1}^\infty \Gamma(m_i)=\{1\}. 
$$
With the notation from above, we get the following: 
\begin{thm} (Theorem 10 in \cite{degeorge}) \\
For $\pi_\tau$ of integrable discrete series, we have
$$
\lim_{i \rightarrow \infty} \frac{m(\Gamma(m_i),\pi_\tau)}{\mathrm{vol}(\Gamma(m_i) \backslash G)} = d_\tau.
$$

\end{thm}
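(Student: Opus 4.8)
The plan is to obtain the statement as an immediate consequence of Theorem~\ref{mult}, using $\mathrm{vol}(\Gamma(m_i)\backslash G)=h_{m_i}\,\mathrm{vol}(\Gamma\backslash G)$ and the fact that the congruence levels grow along the tower.

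First I would dispose of the case $n$ odd. For each $i$ large enough that $m_i\geq 3$ and $\Gamma(m_i)$ is net, Theorem~\ref{mult} gives the exact identity
\[
m(\Gamma(m_i),\pi_\tau)=d_\tau\,\mathrm{vol}(\Gamma\backslash G)\,h_{m_i}=d_\tau\,\mathrm{vol}(\Gamma(m_i)\backslash G),
\]
so the quotient in the statement is constantly $d_\tau$ for all such $i$, and the limit is trivially $d_\tau$. (One can include the remaining members of the tower by the argument proving \cite[Theorem~6]{degeorge}: for $n$ odd the Selberg trace formula carries no unipotent/cusp term, so torsion-freeness --- which holds once $m_i\geq 3$ --- already yields this identity.)

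Next I would treat $n$ even, where Theorem~\ref{mult}, resting on Theorem~\ref{multiplicity} and the estimate of Section~\ref{errorterm}, gives $m(\Gamma(m_i),\pi_\tau)=d_\tau\,\mathrm{vol}(\Gamma(m_i)\backslash G)+O(h_{m_i}/m_i^{\,n})$. The key point is that the implied constant is uniform in $i$: inspecting Section~\ref{errorterm}, it depends only on $\kappa$, on $\dim E_{\tau-\delta_K}$, on $n$ and $k$, and on $\Gamma(1)$ through $\sum_{j}\mathrm{vol}(\Gamma(1)\cap N^j\backslash N^j)$, none of which varies with the level. Dividing by $\mathrm{vol}(\Gamma(m_i)\backslash G)=h_{m_i}\,\mathrm{vol}(\Gamma\backslash G)$ gives
\[
\frac{m(\Gamma(m_i),\pi_\tau)}{\mathrm{vol}(\Gamma(m_i)\backslash G)}=d_\tau+O\!\left(\frac{1}{m_i^{\,n}}\right).
\]
It then remains to see $m_i\to\infty$: since $\{\Gamma(m_i)\}$ is a descending tower of principal congruence subgroups, $\Gamma(m_{i+1})\subseteq\Gamma(m_i)$ forces $m_i\mid m_{i+1}$, so the $m_i$ are nondecreasing, and if they were bounded the tower would stabilize, making $\bigcap_i\Gamma(m_i)$ equal to some $\Gamma(m_{i_0})$, an infinite group because it has finite index in the lattice $\Gamma(1)$ of the noncompact group $G$ --- contradicting $\bigcap_i\Gamma(m_i)=\{1\}$. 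Hence $m_i\to\infty$ and the error term disappears in the limit.

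Once Theorem~\ref{mult} is in hand the computation is essentially automatic, and the only step requiring any care is the uniformity of the error constant along the tower --- equivalently, that the bound $\mathrm{vol}(\Gamma(m)\cap N_2^j\backslash N_2^j)\geq\sqrt{k/(8(n+1))}\,m$ of Section~\ref{errorterm}, and hence the whole error estimate, uses only the congruence condition defining $\Gamma(m)$ and not netness. Alternatively one may simply cite \cite[Theorem~10]{degeorge}, of which the present statement is the specialization to integrable discrete series and to our family $\{\Gamma(m_i)\}$.
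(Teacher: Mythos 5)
Your derivation is correct, but it is a genuinely different route from the paper's: the paper does not prove this statement at all — it is quoted as Theorem 10 of \cite{degeorge} — and only afterwards uses Theorem \ref{mult} to make the convergence quantitative, via the one-sided bound $\frac{m(\Gamma(m_i),\pi_\tau)}{\mathrm{vol}(\Gamma(m_i)\backslash G)}\geq d_\tau-\frac{C}{\mathrm{vol}(\Gamma\backslash G)m_i^n}$. Your proposal is essentially that follow-up computation run two-sidedly and promoted to a proof of the limit statement, which is legitimate and non-circular: Theorem \ref{mult} uses only an identity extracted from the \emph{proof} of DeGeorge's Theorem 10 (the cusp-counting relation in Section \ref{error}), not its statement, and the constant $C$ there depends only on $\kappa$, $\dim E_{\tau-\delta_K}$, $n$, $k$ and $\sum_j\mathrm{vol}(\Gamma(1)\cap N^j\backslash N^j)$, hence is uniform in the level, exactly as you say. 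What your route buys is an explicit error term $O(m_i^{-n})$ rather than bare convergence; what the citation buys is generality (DeGeorge's theorem needs no congruence structure). Two small points to tighten: the divisibility claim $m_i\mid m_{i+1}$ is neither obvious nor needed — nestedness plus the fact that bounded $m_i$ forces bounded, monotone indices $h_{m_i}$ already makes the tower stabilize, giving the contradiction with $\bigcap_i\Gamma(m_i)=\{1\}$; and for the netness hypothesis of Theorem \ref{mult} it is cleaner to note that netness is an element-wise condition, hence inherited by subgroups, so once a single member of the nested tower is net (which happens since $m_i\to\infty$ and the paper guarantees net congruence subgroups) all later members are, and the finitely many exceptional indices are irrelevant for the limit.
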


With the estimates derived in the last chapter, we can give an explicit bound for this growth rate. The equation for the even case in Theorem \ref{mult} shows that
$$
\frac{m(\Gamma(m_i),\pi) }{\mathrm{vol}(\Gamma(m_i) \backslash G)}\geq d_\tau - \frac{C}{ \mathrm{vol}(\Gamma \backslash G) m_i^n  },
$$
because 
$$
\mathrm{vol}(\Gamma(m_i) \backslash G) = h_{m_i}\cdot \mathrm{vol}(\Gamma \backslash G).
$$
In particular, this tells us that the multiplicity is strictly positive (that, of course, means non-zero) as soon as 
$$
m_i \geq \left|\frac{C}{d_\tau \mathrm{vol}(\Gamma \backslash G)} \right|^{\frac{1}{n}}.
$$
\color{black}
We want to apply this result to get some growth estimates of cuspidal cohomology.
Let $X\coloneqq G/K$ be the Riemannian symmetric space associated to $G$. For $m_i \geq 3$ we have that $\Gamma(m_i)$ is a torsion free arithmetic subgroup of $G$. Then 
$$
S(\Gamma(m_i))\coloneqq\Gamma(m_i) \backslash X
$$
is a locally symmetric space and a smooth, non-compact manifold of finite volume. Let $F_{\rho}$ be the irreducible, finite dimensional representation of $G$ of lowest weight $-\rho$.
We denote by $\Tilde{F_{\rho}}$ the locally constant sheaf corresponding to $F_{\rho}$. 
Recalling the definition of cuspidal cohomology (\cite[1.2]{Harald}), we have 
$$
H_{cusp}^* (S(\Gamma(m_i)),\Tilde{F_{\rho}})\coloneqq H^*(\mathfrak{g},K,L_{cusp}^2(\Gamma(m_i) \backslash G)^\infty \otimes F_{\rho}).
$$
Now by \cite[Proposition 5.6]{laplacian} we have
$$
H_{cusp}^* (S(\Gamma(m_i)),\Tilde{F_{\rho}})\coloneqq \bigoplus _{\pi \in \hat{G}_{coh}} H^*(\mathfrak{g},K,\pi_{(K)} \otimes F_{\rho})^{m(\Gamma(m_i),\pi)}.
$$
Using Theorem \ref{mult} and Equation \ref{cohomology} we get the following:
\begin{thm} 
Let $\{\Gamma(m_i)\}_{i=1}^\infty$ be a sequence of arithmetic subgroup of $G=SU(n,1)$, as above. Let $\pi_\tau \in \hat{G}$ be of integrable discrete series with Harish-Chandra parameter $\tau=\rho + \delta_G$, where $\tau \in L_T'$ satisfies
 \begin{align*} 
        \tau_i +i &\geq \tau_j + j \quad \ &\text{for $1\leq i <j\leq n$}, \nonumber \\
        \tau_i  &\geq  \tau_{n+1} + \frac{n(n+1)}{2} + n+1 -i \quad \ &\text{for  $1\leq i \leq n$}.
    \end{align*}
 Let $F_{\rho}$ be the irreducible, finite dimensional representation of $G$ of lowest weight $-\rho$ and denote by $\Tilde{F_{\rho}}$ the locally constant sheaf attached to $F_{\rho}$.
Then, for $n$ odd, we have
\begin{align*}
    \dim H_{cusp}^n (S(\Gamma(m_i)),\Tilde{F_{\rho}}) & \geq  d_\tau \mathrm{vol}(\Gamma(m_i) \backslash G) \\
&=\frac{h_{m_i}}{4^n} \prod_{i=1}^n \frac{\left| \prod_{j=i+1}^{n+1} (\tau_i - \tau_j) \right|}{(2\pi)^{i+1}} \\
& \cdot |D_\ell|^{\frac{(n-1)(n+2)}{4}} \cdot \prod_{k=2}^{n+1} Z_{\ell}(k)  \cdot \prod_{p \in T_\ell}\lambda_p.
\end{align*}
For $n$ even:
\begin{align*}
    \dim H_{cusp}^n (S(\Gamma(m_i)),\Tilde{F_{\rho}}) & \geq  d_\tau \mathrm{vol}(\Gamma(m_i) \backslash G) - C \frac{h_{m_i}}{m_i^n} \\
&=\frac{h_{m_i}}{4^n} \prod_{i=1}^n \frac{\left| \prod_{j=i+1}^{n+1} (\tau_i - \tau_j) \right|}{(2\pi)^{i+1}}  \\
&\cdot |D_\ell|^{\frac{n(n+3)}{4}} \left( \prod_{r=1}^n \frac{1}{(2\pi)^{r+1}}\right) \cdot \prod_{k=2}^{n+1} Z_{\ell}(k)  - C\frac{h_{m_i}}{m_i^n}.
\end{align*}
\end{thm}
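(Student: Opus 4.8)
The plan is to read the lower bound off the spectral decomposition of cuspidal cohomology already recalled in this section, and then to substitute the multiplicity formula of Theorem~\ref{mult}. I would start from the identification
\[
H_{cusp}^* (S(\Gamma(m_i)),\Tilde{F_{\rho}}) \cong \bigoplus_{\pi \in \hat{G}_{coh}} H^*(\mathfrak{g},K,\pi_{(K)} \otimes F_{\rho})^{m(\Gamma(m_i),\pi)}
\]
from \cite[Proposition 5.6]{laplacian}, valid for $\Gamma(m_i)$ since $m_i\geq 3$ makes $\Gamma(m_i)$ torsion free, so that $S(\Gamma(m_i))$ is a manifold and $\Tilde{F_{\rho}}$ a genuine local system. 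Passing to cohomological degree $n$ and keeping only the summand indexed by $\pi=\pi_\tau$ --- every other summand contributes a non-negative integer --- yields
\[
\dim H_{cusp}^n(S(\Gamma(m_i)),\Tilde{F_{\rho}}) \;\geq\; m(\Gamma(m_i),\pi_\tau)\cdot \dim H^n(\mathfrak{g},K,\pi_\tau\otimes F_{\rho}).
\]
The second factor is $1$ by \eqref{cohomology}, that is, by the computation of \cite{zuck} that identifies $\pi_\tau$ with $A_{\mathfrak{q}}(\rho)$ and concentrates its $(\mathfrak{g},K)$-cohomology against $F_{\rho}$ in degree $n=\dim(\mathfrak{u}\cap\mathfrak{p})$; hence $\dim H_{cusp}^n(S(\Gamma(m_i)),\Tilde{F_{\rho}}) \geq m(\Gamma(m_i),\pi_\tau)$.

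It then remains to insert Theorem~\ref{mult}. First I would note that the hypotheses \eqref{cohom-condition} are consistent with $\pi_\tau$ being of integrable discrete series: $\tau_i+i\geq\tau_j+j$ for $i<j$ gives $\tau_i\geq\tau_{i+1}+1$, hence $\tau_1>\tau_2>\cdots>\tau_n$, and the second family of inequalities with $i=n$ gives $\tau_n\geq\tau_{n+1}+\tfrac{n(n+1)}{2}+1>\tau_{n+1}+n$, which is exactly the integrability condition of Section~\ref{harish}. Theorem~\ref{mult} then evaluates $m(\Gamma(m_i),\pi_\tau)$. For $n$ odd it equals $d_\tau\,\mathrm{vol}(\Gamma(m_i)\backslash G)$, which, after inserting \eqref{formaldegree}, Theorem~\ref{volume-final}, and $\mathrm{vol}(\Gamma(m_i)\backslash G)=h_{m_i}\,\mathrm{vol}(\Gamma\backslash G)$, is the first displayed product. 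For $n$ even it equals $d_\tau\,\mathrm{vol}(\Gamma(m_i)\backslash G)+O(h_{m_i}/m_i^n)$, where by the estimate carried out in Section~\ref{errorterm} the $O$-term is bounded in absolute value by $C h_{m_i}/m_i^n$ with $C$ the constant appearing there; hence $m(\Gamma(m_i),\pi_\tau)\geq d_\tau\,\mathrm{vol}(\Gamma(m_i)\backslash G)-C h_{m_i}/m_i^n$, and combining gives the second displayed inequality.

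I do not expect a genuine obstacle here: the analytic and arithmetic content is already packaged in Theorem~\ref{mult} and in \eqref{cohomology}, so the statement is essentially a bookkeeping corollary. The only point requiring care --- and the one I would flag as the main (mild) issue --- is that one obtains a \emph{lower} bound and not an equality, because further elements of $\hat{G}_{coh}$ (the remaining members of the discrete series $L$-packet of $\pi_\tau$, which also contribute to $(\mathfrak{g},K)$-cohomology in degree $n$, as well as any non-tempered $A_{\mathfrak{q}'}(\lambda)$ with a degree-$n$ contribution against $F_{\rho}$) may add non-negative terms that are not controlled by the present methods; turning the inequality into an equality would require all of those multiplicities as well.
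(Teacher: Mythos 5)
Your argument is correct and is essentially the paper's own: the theorem is obtained exactly by combining the Borel--Garland decomposition of $H^*_{cusp}$, the concentration of $H^*(\mathfrak{g},K,\pi_\tau\otimes F_\rho)$ in degree $n$ from \eqref{cohomology}, and the multiplicity formula of Theorem~\ref{mult} together with the error bound $Ch_{m_i}/m_i^n$ from Section~\ref{errorterm}. Your added consistency check of the integrability condition and the remark on why only a lower bound results are fine and consistent with the paper's statement.
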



\section{Application II: Rationality of L-functions} \label{rationality}

As before, let $\ell=\Q(\sqrt{-k})$ be a quadratic extension, $D_\ell$ the corresponding discriminant and $L_\ell$ the corresponding $L$-function. It is also the $L$-function corresponding to the Jacobi-symbol of $\ell$ interpreted as a character which we denote by $\chi_\ell$ (i.e. $L_\ell(s)=L(s,\chi_\ell)$). Then:

\begin{cor} \label{rational}
Let $\ell=\Q(\sqrt{-k})$ be an imaginary quadratic quadratic field. Let $\Gamma^{(1)}$ and $\Gamma^{(2)}$ be arithmetic subgroups as in Definition \ref{defgamma} of $SU(2n-1,1)$ and $SU(2n+1,1)$ respectively and let $\Gamma^{(i)}(m^{(i)})$ be net principal congruence subgroups (as in Definition \ref{congruence-subgroup}) of index $h_{m^{(i)}}$ in $\Gamma^{(i)}$, with $n\geq 2$ and $m^{(i)}\geq 3$. Let $\pi_{\tau^{(i)}} \in \hat{G}$ be the discrete series representation with Harish-Chandra parameter $\tau^{(i)}$, such that 
\begin{align*}
        \tau_i^{(1)} &= \tau_i^{(2)} \ for\ 1\leq i \leq 2n-1.,  \\
        \tau_1^{(1)} &> \tau_2^{(1)} > \ldots \tau_{2n-1}^{(1)}>\tau_{2n}^{(1)} +2n -1,\\
        \tau_{2n-1}^{(2)} &> \tau_{2n}^{(2)}> \tau_{2n+1}^{(2)} >\tau_{2n+2}^{(2)}+2n+1, 
        \end{align*}
where we set 
\begin{align*}
    \tau_{2n}^{(1)} &\ceq -\sum_{i=1}^{2n-1} \tau_i^{(1)}, \\
    \tau_{2n+2}^{(2)} &\ceq -\sum_{i=1}^{2n+1} \tau_i^{(2)}.
\end{align*}

This implies that $\pi_{\tau^{(i)}}$ is of integrable discrete series. Then
    \begin{align*}
    &\sqrt{|D_\ell|} \frac{L_\ell(2n+1)}{(2\pi)^{2n+1}} \\
      = &\frac{m(\Gamma^{(2)}(m^{(2)}),\pi_{\tau^{(2)}})}{m(\Gamma^{(1)}(m^{(1)}),\pi_{\tau^{(1)}})} \cdot \frac{32h_{m^{(1)}}}{h_{m^{(2)}}}  \frac{\displaystyle\prod_{\substack{1 \leq i <  2n \\ }} (\tau_i^{(1)}-\tau_{2n}^{(1)})}{\displaystyle \prod_{\substack{1 \leq i < j \leq 2n+2 \\ 2n \leq j}}(\tau_i^{(2)}-\tau_j^{(2)})}\\
      & \cdot \prod_{p \in T_\ell^{(1)}}\frac{p^{2n}-1}{p+1} \cdot \prod_{p \in T_\ell^{(2)}}\frac{p+1}{p^{2n+2}-1} |D_\ell|^{2n}  \frac{(2n+2)!}{(-1)^n B_{2n+2}},
    \end{align*}
    where the set $T_\ell^{(1)}$ (resp. $T_\ell^{(2)}$) consists of those ramified primes $p=\mathfrak{p}^2$ in $\ell$, where $(-1)^{n-1} \not\in N_{\ell_{\mathfrak{p}} / \Q_p}(\ell_{\mathfrak{p}}^\times)$ (resp. $(-1)^{n} \not\in N_{\ell_{\mathfrak{p}} / \Q_p}(\ell_{\mathfrak{p}}^\times)$).
In particular, this implies that $\sqrt{|D_\ell|} \frac{L_\ell(2n+1)}{(2\pi)^{2n+1}}$ is a rational number.
\end{cor}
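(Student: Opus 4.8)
The plan is to apply Theorem~\ref{mult} to both $\Gamma^{(1)}(m^{(1)})\subset SU(2n-1,1)$ and $\Gamma^{(2)}(m^{(2)})\subset SU(2n+1,1)$ and to divide one resulting identity by the other, solving for $L_\ell(2n+1)$. Since $2n-1$ and $2n+1$ are both odd, Theorem~\ref{mult} applies in its exact, error-term-free form. First I would check that the stated inequalities on $\tau^{(1)}$ and $\tau^{(2)}$ are precisely the integrability conditions of Theorem~\ref{mult} for the two groups (for $\tau^{(2)}$ this uses $\tau_i^{(1)}=\tau_i^{(2)}$ for $i\le 2n-1$ together with the chain $\tau_1^{(1)}>\cdots>\tau_{2n-1}^{(1)}$), so that $\pi_{\tau^{(1)}}$ and $\pi_{\tau^{(2)}}$ are both of integrable discrete series and
\begin{align*}
m(\Gamma^{(1)}(m^{(1)}),\pi_{\tau^{(1)}}) &= \frac{h_{m^{(1)}}}{4^{2n-1}}\left(\prod_{i=1}^{2n-1}\frac{\left|\prod_{j=i+1}^{2n}(\tau_i^{(1)}-\tau_j^{(1)})\right|}{(2\pi)^{i+1}}\right) |D_\ell|^{s(2n-1)} \prod_{k=2}^{2n} Z_{\ell}(k) \prod_{p\in T_\ell^{(1)}}\lambda_p^{(1)},\\
m(\Gamma^{(2)}(m^{(2)}),\pi_{\tau^{(2)}}) &= \frac{h_{m^{(2)}}}{4^{2n+1}}\left(\prod_{i=1}^{2n+1}\frac{\left|\prod_{j=i+1}^{2n+2}(\tau_i^{(2)}-\tau_j^{(2)})\right|}{(2\pi)^{i+1}}\right) |D_\ell|^{s(2n+1)} \prod_{k=2}^{2n+2} Z_{\ell}(k) \prod_{p\in T_\ell^{(2)}}\lambda_p^{(2)}.
\end{align*}

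Next I would extract $L_\ell(2n+1)$ from the second identity, using the first to eliminate $\prod_{k=2}^{2n}Z_{\ell}(k)$, and track the factors. (i) The product $\prod_{k=2}^{2n}Z_{\ell}(k)$ is common to both lines, so only the two extra factors $Z_{\ell}(2n+1)=L_\ell(2n+1)$ (odd argument) and $Z_{\ell}(2n+2)=\zeta(2n+2)$ (even argument) survive. (ii) Because $\tau_i^{(1)}=\tau_i^{(2)}$ for $i\le 2n-1$, every difference $\tau_i-\tau_j$ with $j\le 2n-1$ cancels between the two $\omega_G$-type products; what is left is $\prod_{1\le i<2n}(\tau_i^{(1)}-\tau_{2n}^{(1)})$ in the numerator against $\prod_{1\le i<j\le 2n+2,\,2n\le j}(\tau_i^{(2)}-\tau_j^{(2)})$ in the denominator, and the ordering hypotheses make every such difference positive, so all the absolute values disappear. (iii) The prefactors $4^{-(2n-1)}$ and $4^{-(2n+1)}$ combine to $4^{-2}$, and the powers of $2\pi$ collect into a single power. (iv) Substituting Euler's formula $\zeta(2n+2)=\frac{(-1)^{n}(2\pi)^{2n+2}B_{2n+2}}{2(2n+2)!}$ introduces $B_{2n+2}$, $(2n+2)!$, and a factor $2$ which combines with $4^{2}=16$ to give the $32$ in the statement. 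Multiplying through by $\sqrt{|D_\ell|}/(2\pi)^{2n+1}$, all powers of $2\pi$ cancel and the powers of $|D_\ell|$ collect into a single integer power, which is the factor $|D_\ell|^{2n}$ displayed in the statement. Finally the sets $T_\ell^{(i)}$ and the explicit values $\lambda_p=\frac{p^{q+1}-1}{p+1}$ for $q=2n-1,2n+1$ are the ones computed in Section~\ref{prasad}: for $\su(q,1)$ with $q$ odd, $T_\ell$ consists exactly of the ramified primes $p=\mathfrak{p}^2$ of $\ell$ with $(-1)^{(q-1)/2}\notin N_{\ell_{\mathfrak{p}}/\Q_p}(\ell_{\mathfrak{p}}^\times)$, which for $q=2n-1$ and $q=2n+1$ gives the conditions $(-1)^{n-1}\notin N_{\ell_{\mathfrak{p}}/\Q_p}(\ell_{\mathfrak{p}}^\times)$ and $(-1)^{n}\notin N_{\ell_{\mathfrak{p}}/\Q_p}(\ell_{\mathfrak{p}}^\times)$ respectively. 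This produces the displayed identity.

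The rationality assertion is then immediate: on the right-hand side the multiplicities $m(\Gamma^{(i)}(m^{(i)}),\pi_{\tau^{(i)}})$ and the indices $h_{m^{(i)}}$ are positive integers, the differences $\tau_i-\tau_j$ of Harish--Chandra parameters are integers, the factors $\lambda_p$ are rational by \eqref{lambda}, and $B_{2n+2}$, $(2n+2)!$ and $|D_\ell|$ are rational; hence $\sqrt{|D_\ell|}\,\frac{L_\ell(2n+1)}{(2\pi)^{2n+1}}\in\Q$. I expect the only genuine difficulty to be the bookkeeping in the middle step: verifying that the powers of $2\pi$ cancel completely and that the powers of $|D_\ell|$ collect into exactly the stated exponent, that the surviving product of Harish--Chandra parameter differences is precisely the one indexed by the pairs with $j\ge 2n$ (with no stray absolute values), and that the numerical constant is exactly $32\cdot\frac{(2n+2)!}{(-1)^nB_{2n+2}}$. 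Everything else is a direct substitution into Theorem~\ref{mult} and the local computations of Section~\ref{prasad}.
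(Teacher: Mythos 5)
Your proposal is correct and follows essentially the same route as the paper's own proof: apply the exact (odd-case) formula of Theorem~\ref{mult} to $SU(2n-1,1)$ and $SU(2n+1,1)$, divide the two multiplicity identities so that $\prod_{k=2}^{2n}Z_{\ell}(k)$ and the shared Harish--Chandra parameter differences cancel, insert Euler's formula for $\zeta(2n+2)$ (producing the factor $32$ and $B_{2n+2}$), and read off rationality from the resulting expression. One bookkeeping point you inherit from the statement (and which the paper's proof shares): since $s(2n+1)-s(2n-1)=\tfrac{4n+1}{2}$, solving for $\sqrt{|D_\ell|}\,L_\ell(2n+1)/(2\pi)^{2n+1}$ actually leaves a factor $|D_\ell|^{-2n}$ rather than $|D_\ell|^{2n}$ on the right-hand side, which does not affect the rationality conclusion.
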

\begin{proof}
From Theorem \ref{mult} for the two cases $n \mapsto 2n-1$ and $n \mapsto 2n+1$, we can deduce (by dividing one equation by the other) that 
    \begin{align*}
        & \frac{m(\Gamma^{(2)}(m^{(2)}),\pi_{\tau^{(2)}})}{m(\Gamma^{(1)}(m^{(1)}),\pi_{\tau^{(1)}})}\\
        =  &\frac{h_{m^{(2)}}}{16h_{m^{(1)}}}  
        \frac{\displaystyle \prod_{\substack{1 \leq i < j \leq 2n+2 \\ 2n \leq j}}(\tau_i^{(2)}-\tau_j^{(2)})}{\displaystyle\prod_{\substack{1 \leq i <  2n \\ }} (\tau_i^{(1)}-\tau_{2n}^{(1)})}
    \cdot |D_\ell|^{\frac{4n+1}{2}} \\
    &\cdot \frac{L_\ell(2n+1)}{(2\pi)^{2n+1}}\cdot \frac{\zeta(2n+2) }{(2\pi)^{2n+2}}
    \cdot \prod_{p \in T_\ell^{(1)}}\frac{1}{\lambda_p^{(1)}} \cdot \prod_{p \in T_\ell^{(2)}}\lambda_p^{(2)}.
    \end{align*}
    We rearrange this to obtain the expression
    \begin{align*}
      &|D_\ell|^{\frac{1}{2}}  \frac{L_\ell(2n+1)}{(2\pi)^{2n+1}} \\
      = &\frac{m(\Gamma^{(2)}(m^{(2)}),\pi_{\tau^{(2)}})}{m(\Gamma^{(1)}(m^{(1)}),\pi_{\tau^{(1)}})} \cdot \frac{16 h_{m^{(1)}}}{h_{m^{(2)}}} \\
      & \cdot \frac{\displaystyle\prod_{\substack{1 \leq i <  2n \\ }} (\tau_i^{(1)}-\tau_{2n}^{(1)})}{\displaystyle \prod_{\substack{1 \leq i < j \leq 2n+2 \\ 2n \leq j}}(\tau_i^{(2)}-\tau_j^{(2)})} \prod_{p \in T_\ell^{(1)}}\lambda_p^{(1)}\cdot \prod_{p \in T_\ell^{(2)}}\frac{1}{\lambda_p^{(2)}}|D_\ell|^{2n}  \frac{(2\pi)^{2n+2}}{\zeta(2n+2) } 
    \end{align*}
    It is well known that 
    $$
    \zeta(2n+2)=(-1)^n \frac{(2\pi)^{2n+2}B_{2n+2}}{2(2n+2)!},
    $$
    where $B_n$ denotes the $n$-th Bernoulli number. 
   We can deduce that
    \begin{align*}
    &|D_\ell|^{\frac{1}{2}}  \frac{L_\ell(2n+1)}{(2\pi)^{2n+1}} \\
      = &\frac{m(\Gamma^{(2)}(m^{(2)}),\pi_{\tau^{(2)}})}{m(\Gamma^{(1)}(m^{(1)}),\pi_{\tau^{(1)}})} \cdot \frac{32h_{m^{(1)}}}{h_{m^{(2)}}}  \frac{\displaystyle\prod_{\substack{1 \leq i <  2n \\ }} (\tau_i^{(1)}-\tau_{2n}^{(1)})}{\displaystyle \prod_{\substack{1 \leq i < j \leq 2n+2 \\ 2n \leq j}}(\tau_i^{(2)}-\tau_j^{(2)})}\\
      & \cdot \prod_{p \in T_\ell^{(1)}}\lambda_p^{(1)}\cdot \prod_{p \in T_\ell^{(2)}}\frac{1}{\lambda_p^{(2)}} |D_\ell|^{2n}  \frac{(2n+2)!}{(-1)^n B_{2n+2}}.
    \end{align*}
    The expression on the right hand side only involves rational numbers, which implies that
    \begin{align} \label{rat}
        \sqrt{|D_\ell|} \frac{L_\ell(2n+1)}{(2\pi)^{2n+1}}\in \Q.
    \end{align}

\end{proof}

\begin{rmk}
    A possible choice for $\tau^{(i)}$ is for example
    \begin{align*}
        \tau_i^{(1)} & \ceq 2n+2-i &\ \text{for }i\leq 2n-1, \\
        \tau_i^{(2)} &\ceq 2n+2 -i  &\ \text{for }i\leq 2n+1. 
    \end{align*}
    in which case we have 
    \begin{align*}
        \frac{\displaystyle\prod_{\substack{1 \leq i <  2n \\ }} (\tau_i^{(1)}-\tau_j^{(1)})}{\displaystyle \prod_{\substack{1 \leq i < j \leq 2n+2 \\ 2n \leq j}}(\tau_i^{(2)}-\tau_j^{(2)})} = \frac{\displaystyle\prod_{\substack{1 \leq i <  2n \\ }} (2n-i+2+(n+2)(2n-1))}{\displaystyle (2n-1)! (2n)!\prod_{\substack{1 \leq i \leq 2n+1}}(2n+2-i+(n+1)(2n+1))} \\
        =  \frac{1}{(2n-1)! (2n)!(2n^2+3n+2)} \prod_{\substack{1 \leq i <  2n \\ }} \frac{2n^2+5n-i}{2n^2+5n+3-i}. 
    \end{align*}
\end{rmk}




\begin{thebibliography}{99}

\bibitem{apo}
Tom M. Apostol,
\emph{Introduction to Analytic Number Theory},
Springer, 1976.

\bibitem{borelgroups} 
Armand Borel, 
\textit{Introduction to Arithmetic Groups}, 
American Mathematical Society, 
2019.

\bibitem{borel}
Armand Borel and Nolan Wallach,
\emph{Continuous Cohomology, Discrete Subgroups, and Representations of Reductive Groups},
American Mathematical Society, 2000.

\bibitem{laplacian}
Armand Borel and Howard Garland,
\emph{Laplacian and the Discrete Spectrum of an Arithmetic Group},
American Journal of Mathematics, vol. 105, no. 2, pp. 309--335, 1983.

\bibitem{vol}
Luis J. Boya, E.C.G. Sudarshan, and Todd Tilma,
\emph{Volumes of Compact Manifolds},
Reports on Mathematical Physics, vol. 52, no. 3, pp. 401--422, 2003.

\bibitem{degeorge}
David Lee DeGeorge,
\emph{On a Theorem of Osborne and Warner: Multiplicities in the Cuspidal Spectrum},
Journal of Functional Analysis, vol. 48, pp. 81--94, 1981.

\bibitem{emery}
Vincent Emery and Matthew Stover,
\emph{Covolumes of nonuniform lattices in PU(n, 1)},
American Journal of Mathematics, vol. 136, 2014.


\bibitem{hanke}
Wee Teck Gan, Jonathan P. Hanke, and Jiu-Kang Yu,
\emph{On an exact mass formula of Shimura},
Duke Mathematical Journal, vol. 107, 2001.


\bibitem{Harald}
Harald Grobner,
\emph{A Growth Condition for Cuspidal Cohomology of Arithmetically Defined Quaternionic Hyperbolic n-Manifolds},
Monatshefte für Mathematik, vol. 159, 2010.


\bibitem{gr}
Benedict H. Gross,
\emph{On the motive of a reductive group},
Inventiones Mathematicae, vol. 130, pp. 287--313, 1997.




\bibitem{hida}
Haruzo Hida,
\emph{Elementary Theory of L-functions and Eisenstein Series},
London Mathematical Society Student Texts,
Cambridge University Press, Cambridge, 1993.

\bibitem{knapp}
Anthony W. Knapp,
\emph{Representation Theory of Semisimple Groups: An Overview Based on Examples},
Princeton University Press, 1986.

\bibitem{milicic}
Dragan Miličić,
\emph{Asymptotic Behaviour of Matrix Coefficients of the Discrete Series},
Duke Mathematical Journal, vol. 44, no. 1, 1977.

\bibitem{milne}
James S. Milne,
\emph{Reductive Groups}, 
Course Notes, 2012.


\bibitem{finiteclassical}
Takashi Ono,
\emph{On algebraic groups and discontinuous groups},
Nagoya Mathematical Journal, vol. 27, 1966.



\bibitem{prasad}
Gopal Prasad,
\emph{Volumes of S-arithmetic quotients of semi-simple groups},
Publications Mathématiques de l'IHÉS, vol. 69, pp. 91--114, 1989.

\bibitem{rohlfs}
Jürgen Rohlfs and Birgit Speh,
\emph{Representations with cohomology in the discrete spectrum of subgroups of SO(n,1)(Z) and Lefschetz numbers},
Annales Scientifiques de l'École Normale Supérieure, vol. 1, pp. 89--136, 1987.

\bibitem{serre}
Jean-Pierre Serre,
\emph{Bounds for the orders of the finite subgroups of G(k)},
2010.

\bibitem{Siegel}
Carl Ludwig Siegel, Komaravolu Chandrasekharan, and Heinrich Maass,
\emph{Berechnung von Zetafunktionen an ganzzahligen Stellen},
1979.



\bibitem{tits}
Jacques Tits,
\emph{Reductive groups over local fields},
Proceedings of Symposia in Pure Mathematics, vol. 33, no. 1, pp. 29--70, 1977.


\bibitem{zuck}
David A. Vogan Jr. and Gregg J. Zuckerman,
\emph{Unitary representations with nonzero cohomology},
Compositio Mathematica, vol. 53, pp. 51--90, 1984.

\bibitem{finiteclassical2}
G. E. Wall,
\emph{On the conjugacy classes in the unitary, symplectic and orthogonal groups},
Journal of the Australian Mathematical Society, vol. 3, 1963.

\bibitem{wash}
Lawrence C. Washington,
\emph{Introduction to Cyclotomic Fields},
Springer, 1982.

\bibitem{warner1}
Garth Warner and M. Scott Osborn,
\emph{Multiplicities of the Integrable Discrete Series: The Case of a Nonuniform Lattice in an R-rank One Semisimple Group},
Journal of Functional Analysis, vol. 30, pp. 287--310, 1978.

\bibitem{warner}
Garth Warner and M. Scott Osborn,
\emph{The Selberg Trace Formula},
Journal für die reine und angewandte Mathematik, 1981.




\end{thebibliography}
\end{document}